\theoremstyle{plain}
\newtheorem{definition}{Definition}
\newtheorem{theorem}{Theorem}
\newtheorem{proposition}{Proposition}
\newtheorem{lemma}{Lemma}
\newtheorem{remark}{Remark}
\definecolor{assumplabelcolor}{RGB}{0,150,0}
\newtheoremstyle{assumpstyle}
  {3pt}{3pt}{\itshape}{}{\bfseries}{.}{0.5em}
  {\thmname{#1}~{\color{assumplabelcolor}\thmnote{#3}}}
\theoremstyle{assumpstyle}
\newenvironment{assumption}[1]
  {\assumptionmanual[#1]}
  {\endassumptionmanual}
\newlist{assumpdesc}{description}{1}
\setlist[assumpdesc]{labelwidth=1cm, leftmargin=!, labelindent=0cm}
\newcommand{\assumptionitem}[2]{%
  \item[\textbf{(#1)}]\label{#2}%
}
\numberwithin{equation}{section}
\numberwithin{theorem}{section}
\numberwithin{proposition}{section}
\numberwithin{definition}{section}
\numberwithin{remark}{section}
\numberwithin{corollary}{section}
\numberwithin{lemma}{section}
\numberwithin{remark}{section}
\newcommand*\bigcdot{\mathpalette\bigcdot@{.5}}
\newcommand*\bigcdot@[2]{\mathbin{\vcenter{\hbox{\scalebox{#2}{$\m@th#1\bullet$}}}}}
\begin{document}
	\sloppy
	
	\begin{center}
		{\Large \textbf{Mean Field Games with Reflected Dynamics}} \\[0pt]
		~\\[0pt]  Ayoub Laayoun, Imane Jarni and Badr Missaoui 
        \renewcommand{\thefootnote}{}

        \footnotetext{Moroccan Center for Game Theory, Mohammed VI Polytechnic University, Rabat, Morocco.}
        \footnotetext{E-mails: \text{ayoub.laayoun@um6p.ma, imane.jarni@um6p.ma, badr.missaoui@um6p.ma}}
        	\renewcommand{\thefootnote}{\arabic{footnote}}

	\end{center}
	
	\renewcommand{\thefootnote}{\arabic{footnote}}
	
\begin{abstract}
This paper establishes the existence of an equilibrium for a class of mean field games governed by reflected stochastic differential equations. The analysis is developed within the framework of relaxed controls and the associated martingale problem, providing a flexible approach that ensures the compactness and continuity properties required for the existence proof. Under a suitable convexity assumption, we recover an equilibrium for the original (non-relaxed) mean field game. Furthermore, by imposing a uniform ellipticity condition, we establish the existence of a Markovian equilibrium.
\end{abstract}
{\bf Keywords}: Mean field games; Relaxed control; Compactification method; Controlled martingale problem; Reflected stochastic differential equations. \\
\textbf{MSC2020:} 49N80, 60H30, 93E20, 60G07 .

\section{Introduction }
The study of Mean Field Games (MFGs), introduced in \cite{malhame,lasry3, lasry2, LasryP}, offers a powerful framework for analyzing approximate Nash equilibria in large, symmetric population games where players interact only through the empirical distribution of all players' states or strategies. This approach, as described in \cite{Bookdela}, involves approximating the empirical distribution with an externally specified distribution and then examining the optimization problem of a representative player. The next step is to identify a fixed point where the distribution of the representative player’s optimal state or strategy matches the  specified distribution. 

In this paper, we study a class of MFGs with reflection, based on the standard fixed-point formulation.\\
Let $\mu = (\mu_t)_{t \in [0,T]}$ be a deterministic flow of probability measures in $\mathcal{P}(\mathbb{R}_+)$. For a given $\mu$, we consider the stochastic control problem consisting in minimizing the following cost function over all admissible controls $(u_t)_{t\in[0,T]}$.
\begin{equation}\label{control-point} 
\mathbb{E}\Bigg[
\int_0^T f(t,X_t,\mu_t,u_t)\,\mathrm{d}t
+ \int_0^T h(t,X_t,\mu_t)\,\mathrm{d}K_t + g(X_T,\mu_T)
\Bigg],
\end{equation}
where the state process satisfies the reflected dynamics
\begin{equation*}
\begin{cases}
\mathrm{d}X_t = b(t,X_t,\mu_t,u_t)\,\mathrm{d}t
+ \sigma(t,X_t,\mu_t,u_t)\,\mathrm{d}B_t
+ \mathrm{d}K_t, \\[6pt]
X_0 \sim \lambda, \\[4pt]
X_t \ge 0 \quad \text{a.s. for all } t \in [0,T], \\[6pt]
\displaystyle \int_0^T X_t \,\mathrm{d}K_t = 0 
\quad \text{a.s.}
\end{cases}
\end{equation*}
Here $K$ is an adapted, non-decreasing process with $K_0 = 0$, enforcing the state constraint through the Skorokhod condition.\\
A mean field equilibrium is a pair $(\mu,u)$ such that $u$ is optimal for the control problem associated with $\mu$, and the consistency condition
\begin{equation}\label{mean field-point}
\mu_t = Law(X_t), \qquad t \in [0,T],
\end{equation}
is satisfied, where $X$ denotes the corresponding optimal state process.

The existence of mean field game equilibria in the setting without reflection has been established using three main approaches. The first is the analytical approach introduced by Lasry and Lions in \cite{LasryP}, which is based on the study of a coupled system of forward--backward partial differential equations. The backward equation is the Hamilton--Jacobi--Bellman equation associated with the representative agent’s optimization problem, while the forward equation is a Kolmogorov--Fokker--Planck equation describing the evolution of the state distribution arising from the consistency condition.

A second line of work adopts a probabilistic approach based on the Pontryagin maximum principle. In this framework, the optimal control problem is characterized through adjoint processes, and the mean field equilibrium is obtained by solving a system of McKean--Vlasov forward--backward stochastic differential equations. We refer, for instance, to \cite{probaappr1, probaappr2, delanaly} for developments along these lines.

A third approach, proposed by Lacker in \cite{laker}, relies on a weak formulation of stochastic control problems. This method allows for relaxed controls and is formulated in terms of martingale problems, providing a game-theoretic interpretation of MFGs. Using this relaxed solution concept, Lacker proves existence by first establishing the upper hemicontinuity of the representative agent’s best-response correspondence with respect to a given flow of measures $\mu$, via Berge’s maximum theorem. He then applies the Kakutani--Fan--Glicksberg fixed-point theorem to show the existence of a measure-valued process $\mu$ such that the law of the state process, under an optimal response to $\mu$, coincides with $\mu$ itself.

In the reflection setting, Bayraktar et al.\cite{Bayraktar-MFG} developed a mean field game formulation for a large symmetric queueing system operating under heavy-traffic conditions, where numerous strategic servers interact weakly through the empirical distribution of their queue lengths. The authors analyzed this model using the first MFG approach, assuming that the volatility is nondegenerate and independent of both the mean field and the control variables. In the absence of mean field interactions---namely, in the framework of a controlled reflected stochastic differential equation (RSDE)---related problems have been thoroughly examined in several works (see, e.g.,~\cite{Bayraktar-cont,Kushner-Variance, Kushner-Book, laayoun-control, Ref-cont-1}).

Extending the results of~\cite{laker}, we investigate mean field games (MFGs) with reflection as described in~\eqref{control-point} and~\eqref{mean field-point}. Motivated by Lacker’s methodology, our analysis employs the relaxed control framework for stochastic optimal control problems to study the reflected MFG setting. The proposed approach builds upon the seminal contributions of El Karoui et al.~\cite{jeanbl} and the general martingale problem framework introduced by Haussmann and Lepeltier~\cite{Haussmann-lepeltier}, integrating these techniques to address the reflection component within the MFG structure.

The remainder of the paper is organized as follows. 
Section~\ref{Assumptions and main results} recalls the notion of relaxed controls for stochastic control problems governed by systems of reflected stochastic differential equations (SDEs) and introduces the framework of MFGs with reflection. 
This section also presents the standing assumptions on the coefficients and states the main existence results, namely Theorems~\ref{theomfg} and~\ref{theorem:Markovian-new}, together with the proof of the latter. 
Finally, Section~\ref{proof of theorem-mfg} is devoted to the proof of Theorem~\ref{theomfg}.

\section{Assumptions and the main result}\label{Assumptions and main results}
For a measurable space $(\Omega, \mathcal{F})$, let $\mathcal{P}(\Omega)$ denote the set of probability measures on $(\Omega, \mathcal{F})$. When $\Omega$ is a topological space, let $\mathcal{B}(\Omega)$ denote its Borel $\sigma$-algebra, and endow $\mathcal{P}(\Omega)$ with the topology of weak convergence.

For a finite time horizon $T > 0$, we denote by $\mathcal{C}[0,T]$ the space of continuous functions from $[0, T]$ to $\mathbb{R}$, and by $\mathcal{A}[0,T] \subset \mathcal{C}[0,T]$ the subset of non-decreasing functions. We endow $\mathcal{C}[0,T]$ with the supremum norm $\|\cdot\|_T$, defined by
$$
\|x\|_t := \sup_{s \in [0,t]} |x_s|, \quad t \in [0, T], \quad x \in \mathcal{C}[0,T].
$$
For $\mu \in \mathcal{P}(\mathcal{C}[0,T])$, denote by $\mu_t$ the image of $\mu$ under the map $\mathcal{C}[0,T] \ni x \mapsto x_t \in \mathbb{R}$.

For $p \geq 0$ and a separable metric space $(E, d)$, let $\mathcal{P}_p(E)$ denote the set of $\mu \in \mathcal{P}(E)$ satisfying
$$
\int_E d^p(x, x_0) \mu(dx) < \infty,
$$
for some (and thus for any) $x_0 \in E$. For $p \geq 1$ and $\mu, \nu \in \mathcal{P}_p(E)$, let $W_{E,p}$ denote the $p$-Wasserstein distance, given by
\begin{equation}\label{Wassersteindis}
W_{E,p}(\mu, \nu) := \inf_{\gamma \in \mathcal{P}(E \times E)} \left( \int_{E \times E} d^p(x, y) \gamma(dx, dy) \right)^{1/p},
\end{equation}
where $\gamma$ has marginals $\mu$ and $\nu$.

Unless otherwise stated, the space $\mathcal{P}_p(E)$ is always equipped with the metric $W_{E,p}$. If $E$ is complete and separable, so is $(\mathcal{P}_p(E), W_{E,p})$.

Given $p \geq 0$ and $\mu \in \mathcal{P}(\mathbb{R})$, we will frequently use the abbreviation
$$
|\mu|^p := \int_{\mathbb{R}} |x|^p \mu(dx).
$$
Similarly, for $\mu \in \mathcal{P}(\mathcal{C}[0,T])$, we write
$$
\|\mu\|^p_{t} := \int_{\mathcal{C} [0,T]}\|x\|_t^p \mu(dx).
$$
\subsection{The Stochastic control problem}
We consider the following stochastic optimal control problem: 
\begin{equation} \label{cost1}
\inf_{u} \mathbb{E}\left(\int_{0}^{T} f(t,X_{t},u_{t})\mathrm{d}t + \int_{0}^{T} h(t, X_t) \mathrm{d}K_{t} +g(X_T),\right),
\end{equation}
subject to 
\begin{equation} \label{eq01.1}
\begin{cases}
X_t =X_0+\displaystyle\int_0^t b(s, X_s, u_s)\mathrm{d}s + \displaystyle\int_0^t \sigma(s, X_s,u_s) \mathrm{d}B_s +  K_t, & t \in [0, T]; \\
X_0  \sim \lambda;\\
X_t \geq 0 \quad \text{a.s}; \\
\displaystyle\int_0^T X_t \mathrm{d}K_t = 0 \quad \text{a.s.},
\end{cases}
\end{equation}
where $(K_{t})_{t \in [0,T]}$ is non-decreasing process, the real-valued process $(X_{t})_{ t \in [0,T]}$ represents the state process and, and $u$ belongs to the set of controls, which consists of adapted processes taking values in a compact subset $U\subset\mathbb{R}^d$, $d\geq 1$.\\
We will use the compactification method to study the control problem $\eqref{cost1}$, where the state is given by the reflected SDE \eqref{eq01.1}. By introducing a relaxed version of the classical control, we reformulate the problem as a martingale problem on a suitable canonical space.

Now, we introduce the concept of control for the system described by \eqref{eq01.1}.
\begin{definition} \label{def of control}
We call an admissible system control a set $\alpha = (\Omega,\mathcal{F},\mathbb{F},\mathbb{P},B,u,X,K)$ such that 
\begin{description}
\item[1)]  $(\Omega,\mathcal{F},\mathbb{P})$ is a probability space equipped with a filtration $\mathbb{F}=(\mathcal{F}_t)_{t\geq 0}$;
\item[2)] $u$ is a $U$-valued process, $\mathbb{F}$-progressively measurable;
\item[3)] $K$ is $\mathbb{R}_{+}$-valued process, $\mathbb{F}$-adapted; the sample paths of $K$ are in $\mathcal{A}[0,T]$,i.e., for each $\omega \in \Omega$, $K_{.}(\omega) \in \mathcal{A}[0,T]$;
\item[4)] $B$ is a standard Brownian motion on $(\Omega, \mathcal{F}, \mathbb{F}, \mathbb{P})$, and $X$, the state process, is $\mathbb{F}$-adapted with sample paths in $\mathcal{C}[0,T]$;
\item[5)] $\mathbb{P}\circ(X_0)^{-1}= \lambda$;
\item[6)] The processes $X$ and $K$ satisfy the system described by \eqref{eq01.1}.
\end{description}
 The collection of controls is denoted by $\Lambda$.
\end{definition}
 It is well known from the theory of reflected SDEs \cite{andreyg, skorefl} that, under the Lipschitz condition, the set $\Lambda$ is nonempty.\\
    The cost functional corresponding to a control $\alpha \in \Lambda$ is defined as:
\begin{equation}
J(\alpha):= \mathbb{E}^{\mathbb{P}} \left( \int_{0}^{T} f(t,X_{t},u_{t})\mathrm{d}t + \int_{0}^{T} h(t, X_t)\mathrm{d}K_{t} +g(X_T)\right).
\end{equation}
\subsubsection{Relaxed control}
In order to apply the compactification method, we now reformulate the problem. Since the Brownian motion in the definition of controls is not known in advance, we can reformulate the above control problem as an equivalent martingale problem. This simplifies taking limits. In fact, let 
\begin{equation} \label{operator}
   \mathcal{L} \phi(s,x,u):= \dfrac{1}{2}\sigma^{2}(s,x,u) \phi''(x)+ b(s,x,u)\phi'(x).
\end{equation}
Then we can show that $\alpha$ is a control if and only if $\alpha$ satisfies $\textbf{(1)-(3)}$, and $\textbf{(5)}$ in Definition \ref{def of control}, and condition $\textbf{(6)}$ is replaced by
\begin{description}
\item[6$^{\prime})$] The process $X$ is continuous, $\mathbb{F}$-adapted process, such that 
\begin{itemize}
\item[i)] For each $\phi \in \mathcal{C}^{2}_{b}(\mathbb{R})$, $\mathcal{M}^{\phi}$ is $(\mathbb{F},\mathbb{P})$-continuous martingale, where
\begin{equation} \label{equ2.5}
\mathcal{M}^{\phi}_{\cdot} := \phi(X_{\cdot}) - \int_{0}^{\cdot} \mathcal{L} \phi(s,X_{s},u_{s})\mathrm{d}s - \int_{0}^{\cdot} \phi^{\prime}(X_{s}) \mathrm{d}K_{s},
\end{equation}
and $\mathbb{P}$-a.s for every $t\in[0,T]$;
\item[ii)] $X_{t} \geq 0 $;
\item[iii)] $\displaystyle\int_{0}^{T} X_{t} \mathrm{d}K_{t} =0$.
\end{itemize}
\end{description}
\begin{remark}
Using the same arguments as in the proof of Proposition IV-2.1 in \cite{watanabe}, we can show that the existence of weak solutions to the system \eqref{eq01.1} is equivalent to the existence of solutions to the associated martingale problem $\mathbf{(6^{\prime})}$.
\end{remark}
Now, we introduce a relaxed control framework, where the controls are adapted processes taking values in the space of probability measures on $U$. Specifically, the $U$-valued process $u$ is replaced by an $\mathcal{P}(U)$-valued process $Q$.
\begin{definition} \label{relaxed} A tuple $r = (\Omega,\mathcal{F},\mathbb{F},\mathbb{P},Q,X,K)$ is called a relaxed control if 
\begin{description}
 \item[1)] $(\Omega,\mathcal{F},\mathbb{P})$ is a probability space equipped with a filtration $\mathbb{F}=(\mathcal{F}_t)_{t\geq 0}$;
\item[2)] $Q : [0,T] \times \Omega \rightarrow \mathcal{P}(U)$ is an $\mathbb{F}$-progressively measurable process, and $K$ is an $\mathbb{F}$-adapted process such that $K \in \mathcal{A}[0,T]$;
\item[3)] $X$ is an $\mathbb{F}$-adapted process, $X \in \mathcal{C}[0,T]$ and for each $\phi \in \mathcal{C}^{2}_{b}(\mathbb{R})$, $\mathcal{M}^{\phi}$ is $(\mathbb{F},\mathbb{P})$-martingale, where 
\begin{equation} \label{equ02.5}
\mathrm{}\mathcal{M}^{\phi}_{\cdot} := \phi(X_{\cdot}) - \int_{0}^{\cdot} \int_{U}^{} \mathcal{L} \phi(s,X_{s},u)Q_{s}(\mathrm{d}u)\mathrm{d}s - \int_{0}^{\cdot} \phi'(X_{s}) \mathrm{d}K_{s},
\end{equation}
and $\mathbb{P}$-a.s for every $t\in[0,T]$, the following holds true
\item[4)] $X_{t} \geq 0 $;
\item[5)] $\displaystyle\int_{0}^{T} X_{t} \mathrm{d}K_{t} =0$;
\item[6)] $\mathbb{P}\circ(X_0)^{-1}= \lambda$;
\end{description}
The collection of relaxed controls is denoted by $\tilde{\mathcal{R}}$.

\end{definition}
The cost functional corresponding to a relaxed control $r$ is defined by
\begin{equation}
\hat{J}(r) := \mathbb{E}^{\mathbb{P}} \left( \int_{0}^{T} \int_{U}^{} f(t,X_{t},u)Q_{t}(\mathrm{d}u)\mathrm{d}t + \int_{0}^{T} h(t, X_t)\mathrm{d}K_{t} + g(X_{T})\right).
\end{equation}

\begin{remark}\label{strict-control.}
\begin{itemize}
\item[i)] The relaxed control  $ (\Omega, \mathcal{F}, \mathbb{F}, \mathbb{P}, X, Q, K)$ is said to be strict if $Q_t =\mathrm{} \delta_{u_t}$ for some progressively measurable $U$-valued process $u$.
\item[ii)] Any strict control corresponds to a relaxed control. Indeed, If $\alpha =(\Omega, \mathcal{F}, \mathbb{F}, \mathbb{P}, X, u, K)  \in \Lambda$, then by considering $r=(\Omega, \mathcal{F}, \mathbb{F}, \mathbb{P}, X, Q, K))$ where $(Q_t(\mathrm{d}u) := \delta_{u_t}(\mathrm{d}u))$, it is straightforward to verify that $r$ is a relaxed control. Moreover, we have 
\begin{equation*}
\hat{J}(r) = J(\alpha),
\end{equation*}
 therefore,
\begin{equation*}
\inf_{r\in\tilde{\mathcal{R}} }\hat{J}(r) \leq \inf_{\alpha \in \Lambda} J(\alpha).
\end{equation*}
To obtain the reverse inequality, define, for each $ (t, x) \in [0, T] \times \mathbb{R} $, the set 
\begin{equation*}
S(t, x) := \{ (\sigma^{2}(t, x, u), b(t, x, u), e) : e \geq f(t, x, u), u \in U \}.
\end{equation*} 
If $ S(t, x)$ is convex for each $ (t, x) \in [0, T] \times \mathbb{R} $, it can be shown that for any relaxed control there exists a strict control with an equal or smaller cost. Indeed, since the cost associated to $K$ is independent of the control, by the reasoning outlined in the proof of Theorem 3.6 in \cite{Haussmann-lepeltier}, for any relaxed control $ r = (\Omega, \mathcal{F}, \mathbb{F}, P, X, Q, K) $, there exists a progressively measurable $U$-valued process $ \tilde{u}$ and an $ \mathbb{R}^+ $-valued process $ \tilde{v} $ such that, for almost all $ (t, \omega) \in [0, T] \times \Omega $,
\begin{align*}
&\left( \int_U \sigma^{2}(t, X_t(\omega), u) Q_t(\omega, \mathrm{d}u), \int_U b(t, X_t(\omega), u) Q_t(\omega, \mathrm{d}u), \int_U f(t, X_t(\omega), u) Q_t(\omega, \mathrm{d}u) \right) \\
&= \Big(\sigma^2(t, X_t(\omega), \tilde{u}_t(\omega)), b(t, X_t(\omega), \tilde{u}_t(\omega)), f(t, X_t(\omega), \tilde{u}_t(\omega)) + \tilde{v}_t(\omega) \Big).
\end{align*}
Then, $ \rho = (\Omega, \mathcal{F}, \mathbb{F}, P, X, \tilde{u}, K) $ is a strict control with an equal or smaller cost.
\end{itemize}
\end{remark}
\subsubsection{Control rules }

In the following, we denote by $\mathcal{C}^+$ the space of all continuous functions from $[0,T]$ to $\mathbb{R}^+$, and by $\mathcal{A}^+$ the subset of $\mathcal{C}^+$ consisting of all non-decreasing functions. Both spaces are  equipped with the topology of uniform convergence. We denote by $\mathcal{U}$ the set of all measures on $[0,T] \times U$ such that the first marginal is the Lebesgue measure on $[0,T]$ and the second marginal is a probability measure on $U$. 
The space $\mathcal{U}$ is equipped with the 2-Wasserstein metric, naturally adapted from \eqref{Wassersteindis}, as follows:
\begin{equation*}
d_{\mathcal{U}}(q_1, q_2) := W_{[0,T] \times U, 2}\left(\frac{q_1}{T}, \frac{q_2}{T}\right). 
\end{equation*}
Since $U$ is compact, this definition makes $\mathcal{U}$ a compact, complete, and  separable metric space.\\
We consider the canonical space 
\begin{equation*}
\Omega = \mathcal{C}^+ \times \mathcal{U} \times \mathcal{A}^+,
\end{equation*}
and the filtration $ \mathbb{F}:=(\mathcal{F}_{t})_{t\in [0,T]}$ is generated by the coordinate projections $X$, $Q$, $K$. Specifically, for each $\omega = (x,q,k) \in \Omega$,
\begin{equation*}
X(\omega) =x,\,\, Q(\omega) = q,\,\, K(\omega) = k,
\end{equation*}
and for $t \in [0,T]$, $\mathcal{F}_{t} := \mathcal{F}^{X}_{t} \otimes \mathcal{F}^{Q}_{t} \otimes \mathcal{F}^{K}_{t}$, where 
\begin{equation*}
\mathcal{F}^{X}_{t} = \sigma(X_{s}, s \leq t),\,\, \mathcal{F}^{Q}_{t} = \sigma(Q(F), F \in \mathcal{B}([0,t] \times U) ), \,\, \mathcal{F}^{K}_{t} = \sigma(K_{s}, s \leq t).
\end{equation*}
The following arguments demonstrate that relaxed  controls can be defined using projections mapping. Indeed, since $[0,T]$ and $U$ are compact, each $q \in \mathcal{U}$ can be disintegrated as the following
\begin{equation*}
q(\mathrm{d}u,\mathrm{d}t) = q_{t}(\mathrm{d}u) \mathrm{d}t ,
\end{equation*}
for some measurable $\mathcal{P}(U)$-valued function $(q_{t})_{t\in [0,T]}$.\\
From the definition of the space $\mathcal{U}$ and the Lemma 3.2 in \cite{laker}, there exists an $(\mathcal{F}_{t}^{Q})_{t \in [0, T]}$-predictable $\mathcal{P}(U)$-valued process $\Gamma$ such that for each $q \in \mathcal{U}$,
\begin{equation*}
\Gamma_{t}(q) = q_{t},\, \text{a.e}. \,\, t \in [0,T].
\end{equation*}
Thus, the process $Q^{0}_{t} := \Gamma_{t} \circ Q$ is $\mathbb{F}$-predictable. As a result, for each $ \omega = (x,q,k)$,
\begin{equation*}
Q(\omega)(\mathrm{d}u,\mathrm{d}t)=q(\mathrm{d}u,\mathrm{d}t) = q_{t}(\mathrm{d}u)\mathrm{d}t = \Gamma_{t}(q)(\mathrm{d}u)\mathrm{d}t = \Gamma_{t} \circ Q(\omega)(\mathrm{d}u)\mathrm{d}t = Q^{0}(\omega)(\mathrm{d}u)\mathrm{d}t.
\end{equation*}
This implies an adapted disintegration of Q in terms of the $\mathbb{F}$-progressively measurable process
\begin{equation*}
Q^{0} : [0, T] \times \Omega \longrightarrow \mathcal{P}(U)
\end{equation*}
and therefore enables us to define control rules. For now on, we let $(Q_{t})_{ t \in [0,T]} := (Q^{0}_{t})_{t \in [0,T]}$ for simplicity.
\begin{definition}
For the canonical path space $\Omega$, the canonical filtration $\mathbb{F}$, and the coordinate projections $(X,Q,K)$ introduced above, if $r = (\Omega, \mathcal{F}, \mathbb{F},\mathbb{P}, X, Q, K)$ is a relaxed control, then $\mathbb{P}$ is called a control rule. We denote by $\mathcal{R}$ the space of control rules. In this case, the associated cost functional is defined as 
\begin{equation*}
\tilde{J}(\mathbb{P}) := \hat{J}(r).
\end{equation*}
\begin{remark}
    It is evident that
\begin{equation*}
\inf_{ \mathbb{P} \in \mathcal{R}} \tilde{J}(\mathbb{P}) \geq \inf_{ r\in\tilde{\mathcal{R}} } \hat{J}(r).
\end{equation*}
Conversely, for any relaxed control $r$, one can construct a control rule $\mathbb{P} \in \mathcal{R}$ such that $ \tilde{J}(\mathbb{P}) = \hat{J}(r)$. The proof, similar to that of \cite[Proposition 2.6]{haussman}, applies here in the context of reflected SDEs. In simpler terms, the optimization problems involving relaxed controls and control rules are equivalent. Therefore, it suffices to consider control rules.
\end{remark}
\end{definition}
The next proposition represents $\mathcal{R}$ in terms of reflected SDEs driven by martingale measure.
For the notation of martingale measure and related stochastic integrals used in the result below, we refer to the article \cite{Mel}.\\
The  following proposition  is frequently used in the main text.
\begin{proposition} \label{rep mart}
The existence of a solution $\mathbb{P}$ to the martingale problem $\eqref{equ02.5}$ is equivalent to the existence of a weak solution to the following reflected SDEs:
\begin{equation} \label{eqrep}
\begin{cases} 
\mathrm{d}\tilde{X}_{t} = \displaystyle\int_{U}^{} b(t,\tilde{X}_{t},u)\tilde{Q}_{t}(\mathrm{d}u)\mathrm{d}t + \displaystyle\int_{U}^{} \sigma(t,\tilde{X}_{t},u)\tilde{M}(\mathrm{d}u,\mathrm{d}t) + \mathrm{d}\tilde{K}_{t}, \,\, & \tilde{X}_0  \sim \lambda; \\
\tilde{X}_{t} \geq 0 \,\, \text{a.s};\\
\displaystyle\int_{0}^{T}  \tilde{X}_{t} \mathrm{d}\tilde{K}_{t} = 0 \,\, \text{a.s},
\end{cases}
\end{equation}
where $\tilde{X}$, $\tilde{M}$ and $\tilde{K}$ are defined on some extension $(\tilde{\Omega},\tilde{\mathcal{F}}, \tilde{\mathbb{Q}})$ and $\tilde{M}$ is a martingale measure with intensity $\tilde{Q}$. Moreover, the two solutions are related by
\begin{equation*}
\mathbb{P}\circ (X, Q, K)^{-1} = \tilde{\mathbb{Q}}\circ (\tilde{X}, \tilde{Q}, \tilde{K})^{-1}.
\end{equation*}
\end{proposition}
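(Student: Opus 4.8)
The plan is to establish the two implications separately, with the passage from the reflected SDE \eqref{eqrep} to the martingale problem \eqref{equ02.5} being elementary and the reverse direction carrying the real content. For the easy direction, suppose $(\tilde X,\tilde M,\tilde K)$ is a weak solution of \eqref{eqrep} on some extension $(\tilde\Omega,\tilde{\mathcal F},\tilde{\mathbb Q})$ with $\tilde M$ a martingale measure of intensity $\tilde Q$. For each $\phi\in\mathcal C^2_b(\mathbb R)$ I would apply Itô's formula to $\phi(\tilde X_t)$: the drift yields $\int_0^t\int_U b\,\phi'(\tilde X_s)\,\tilde Q_s(du)\,ds$, the finite-variation reflection term yields $\int_0^t\phi'(\tilde X_s)\,d\tilde K_s$, the quadratic-variation term yields $\tfrac12\int_0^t\int_U\sigma^2\phi''(\tilde X_s)\,\tilde Q_s(du)\,ds$ (since the bracket of the martingale-measure integral against intensity $\tilde Q$ contributes the $\sigma^2$-integral), and the stochastic integral $\int_0^t\int_U\sigma\,\phi'(\tilde X_s)\,\tilde M(du,ds)$ is a martingale. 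Collecting terms shows that $\mathcal M^\phi$ is a continuous martingale, so $\mathbb P:=\tilde{\mathbb Q}\circ(\tilde X,\tilde Q,\tilde K)^{-1}$ solves \eqref{equ02.5}, and the constraints $\tilde X\ge 0$, $\int_0^T\tilde X\,d\tilde K=0$ transfer directly.

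For the reverse direction, starting from a control rule $\mathbb P$ solving \eqref{equ02.5}, I would first localize to extract the continuous local martingale
\[
N_t := X_t - X_0 - \int_0^t\int_U b(s,X_s,u)\,Q_s(du)\,ds - K_t,
\]
obtained by taking $\phi(x)=x$ and approximating it by $\mathcal C^2_b$ cutoffs along the stopping times $\tau_n=\inf\{t:|X_t|\ge n\}$. Applying the martingale property to a cutoff of $\phi(x)=x^2$ and comparing with the Itô expansion of $N_t^2$ then identifies the bracket
\[
\langle N\rangle_t = \int_0^t\int_U\sigma^2(s,X_s,u)\,Q_s(du)\,ds.
\]
The non-decreasing process $K$ enters only as a finite-variation term and is carried along unchanged, so it does not interfere with this decomposition.

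The main obstacle is then to represent $N$ as a stochastic integral $\int_0^t\int_U\sigma(s,X_s,u)\,\tilde M(du,ds)$ against a martingale measure $\tilde M$ of intensity $\tilde Q=Q$. Since the bracket of $N$ is precisely the $\sigma^2$-integral against $Q_s(du)\,ds$, this is exactly the setting of the martingale-measure representation theorem of El Karoui et al.\ \cite{karoui}; following that argument I would construct an orthogonal martingale measure on $U\times[0,T]$ of intensity $Q_s(du)\,ds$ whose $\sigma$-integral reproduces $N$. Because $\sigma$ may degenerate, this generally forces an enlargement of the probability space by an independent white-noise source, which is precisely the role of the extension $(\tilde\Omega,\tilde{\mathcal F},\tilde{\mathbb Q})$ in the statement. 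I expect this to be the most delicate point, since one must ensure the added noise is independent of $(X,Q,K)$ so that their joint law is preserved.

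Finally, setting $\tilde X=X$, $\tilde Q=Q$, $\tilde K=K$ on the extension, the triple $(\tilde X,\tilde M,\tilde K)$ solves \eqref{eqrep} and the reflection conditions $\tilde X\ge 0$, $\int_0^T\tilde X\,d\tilde K=0$ hold verbatim. Since the enlargement only appends independent randomness used to define $\tilde M$, the marginal law of $(X,Q,K)$ is untouched, which yields $\mathbb P\circ(X,Q,K)^{-1}=\tilde{\mathbb Q}\circ(\tilde X,\tilde Q,\tilde K)^{-1}$, completing the equivalence.
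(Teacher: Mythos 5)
Your proposal is correct and follows essentially the same route as the paper: Itô's formula handles the direction from the reflected SDE to the martingale problem, while the converse rests on representing the martingale part as an integral against a martingale measure of intensity $Q$ on an enlarged space, with the reflection and Skorokhod conditions transferring because the extension preserves the law of $(X,Q,K)$. The only difference is that the paper invokes a ready-made representation result (Proposition A.2 of \cite{Fu11}) for the step you sketch via localization, bracket identification, and the El Karoui--M\'el\'eard construction.
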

\begin{proof}
Since $\mathbb{P}$ solves the martingale problem \eqref{equ02.5}, it follows from Proposition A.2 in \cite{Fu11} that there exists an extension of $(\Omega, \mathcal{F}, (\mathcal{F}_{t})_{t\in[0,T]},\mathbb{P})$, $(\tilde{\Omega}, \tilde{\mathcal{F}}, (\tilde{\mathcal{F}})_{t\in[0,T]}, \tilde{\mathbb{P}})$, on which a tuple of adapted stochastic processes $(\tilde{X}, \tilde{Q}, \tilde{K}, \tilde{M})$ is defined such that for all $t \in [0,T]$
\begin{equation*}
\mathrm{d}\tilde{X}_{t} = \int_{U}^{} b(t,\tilde{X}_{t},u)\tilde{Q}_{t}(\mathrm{d}u)\mathrm{d}t + \int_{U}^{} \sigma(t,\tilde{X}_{t},u)\tilde{M}(\mathrm{d}u,\mathrm{d}t) + \mathrm{d}\tilde{K}_{t},
\end{equation*}
and two tuples are related by $\mathbb{P}\circ (X, Q, K)^{-1} = \tilde{\mathbb{Q}}\circ (\tilde{X}, \tilde{Q}, \tilde{K})^{-1}$.\\
The processes $\tilde{X}$ and $\tilde{K}$ are non-negative due the fact that 
$(\tilde{\Omega}, \tilde{\mathcal{F}}, (\tilde{\mathcal{F}_{t}})_{t\in[0,T]}, \tilde{\mathbb{P}})$ is an extension of $(\Omega, \mathcal{F}, (\mathcal{F}_{t})_{t\in[0,T]},\mathbb{P})$. For the Skorokhod condition, we have 
\begin{equation*}
    \tilde{\mathbb{E}}\left(\int_{0}^{T} \tilde{X}_{t} \mathrm{d}\tilde{K}_{t}\right) = \mathbb{E}\left(\int_{0}^{T} X_{t} \mathrm{d}K_{t}\right)  =0,
\end{equation*}
since $\displaystyle\int_{0}^{T} \tilde{X}_{t} \mathrm{d}\tilde{K}_{t}$ is non-negative, we obtain $\displaystyle\int_{0}^{T} \tilde{X}_{t} \mathrm{d}\tilde{K}_{t} = 0$ $\tilde{\mathbb{P}}$-a.s.\\The inverse can be verified by applying Itô's formula.
\end{proof}
\subsection{Relaxed MFGs with controlled RSDE dynamics}\label{relaxed MFG}
We consider a mean field game associated with controlled RSDEs of the form \eqref{control-point} and \eqref{mean field-point}.

As a first step, we fix $\mu \in \mathcal{P}_{2}(\mathcal{C}^+)$ and study the optimal control problem of a representative agent, which consist in minimizing  
$$ \mathbb{E}\left(\int_{0}^{T} f(t,X_{t}, \mu_t, u_{t})\mathrm{d}t + \int_{0}^{T} h(t, X_t, \mu_t) \mathrm{d}K_{t}+g(X_T, \mu_T) \right), $$
over controls $u$, subject to:
\begin{equation*}
\begin{cases}
X_t =X_0+\displaystyle\int_0^t b(s, X_s, \mu_s, u_s)\mathrm{d}s + \displaystyle\int_0^t \sigma(s, X_s, \mu_s, u_s) \mathrm{d}B_s +  K_t, & t \in [0, T]; \\
X_0  \sim \lambda;\\
X_t \geq 0 \quad \text{a.s}; \\
\displaystyle\int_0^T X_t \mathrm{d}K_t = 0 \quad \text{a.s.},
\end{cases}
\end{equation*} 
\begin{definition}
Let $ \mu $ be a given probability measure in $ \mathcal{P}_2(\mathcal{C}^+) $. We say that $P$ is a control rule w.r.t $\mu$ if it satisfies the following conditions:
\begin{description}
\item[1)] $(\Omega,\mathcal{F},(\mathcal{F})_{t\in [0,T]}, P)$ is the canonical probability space and $(X, Q, K)$ are the coordinate projections.
\item[2)] For each $\phi \in \mathcal{C}^{2}_{b}(\mathbb{R})$, $\mathcal{M}^{\mu,\phi}$ is well defined $P$-continuous martingale, where 
\begin{equation} \label{equ2.5}
\mathcal{M}^{\mu,\phi}_{t} := \phi(X_{t}) - \int_{0}^{t} \int_{U}^{} \mathcal{L} \phi(s,X_{s},\mu_s,u)Q_{s}(\mathrm{d}u)\mathrm{d}s - \int_{0}^{t} \phi'(X_{s}) \mathrm{d}K_{s}.
\end{equation} 
\item[3)] $\displaystyle\int_{0}^{T} X_{t} \mathrm{d}K_{t} =0$  $P$.a.s.
\item[4)] $P\circ (X_0)^{-1} = \lambda.$
\end{description}
\end{definition}
For a fixed measure $\mu \in \mathcal{P}_{2}(\mathcal{C}^+)$, the corresponding set of control rules is denoted by $\mathcal{R}(\mu)$, the cost functional corresponding to a control rule $P \in \mathcal{R}(\mu) $ is
\begin{equation*}
J(\mu, P) := \mathbb{E}^{\mathbb{P}} \left( \int_{0}^{T} \int_{U}^{} f(t,X_{t},\mu_t, u)Q_{t}(\mathrm{d}u)\mathrm{d}t + \int_{0}^{T} h(t, X_t, \mu_t)\mathrm{d}K_{t} + g(X_{T}, \mu_T)\right),
\end{equation*}
and the (possibly empty) set of optimal control rules is denoted by
\begin{equation*}
\mathcal{R}^*(\mu) := \arg\min_{P \in \mathcal{R}(\mu)} J(\mu, P).
\end{equation*}
\begin{definition}[Relaxed and Markovian MFG solutions]\label{definition2}
A probability measure $ P $ is called a \emph{relaxed MFG solution} if
\begin{equation*}
P \in \mathcal{R}^*(P \circ X^{-1}),
\end{equation*}
that is, $ P $ constitutes a fixed point of the set-valued mapping
\begin{equation*}
\mathcal{P}_2(\mathcal{C}^+) \ni \mu \longmapsto 
\{\, P \circ X^{-1} : P \in \mathcal{R}^*(\mu) \,\}.
\end{equation*}

Moreover:
\begin{enumerate}
    \item If there exists a progressively measurable control process $ u $ such that
    \begin{equation*}
    P\big[\, Q(\mathrm{d}t, \mathrm{d}u) = \delta_{u_t}(\mathrm{d}u)\,\mathrm{d}t \,\big] = 1,
    \end{equation*}
    then $ P $ is referred to as a \emph{strict MFG solution}.

    \item If there exists a measurable function 
    $ \hat{q} : [0,T] \times \mathbb{R} \to \mathcal{P}(U) $ such that
    \begin{equation*}
    P\big[\, Q(\mathrm{d}t, \mathrm{d}u) = \hat{q}(t, X_t)(\mathrm{d}u)\,\mathrm{d}t \,\big] = 1,
    \end{equation*}
    then $ P $ is called a \emph{relaxed Markovian MFG solution}.

    \item If there exists a measurable function 
    $ \hat{\alpha} : [0,T] \times \mathbb{R} \to U $ such that
    \begin{equation*}
    P\big[\, Q(\mathrm{d}t, \mathrm{d}u) = \delta_{\hat{\alpha}(t, X_t)}(\mathrm{d}u)\,\mathrm{d}t \,\big] = 1,
    \end{equation*}
    then $ P $ is called a \emph{Markovian (strict) MFG solution}.
\end{enumerate}
\end{definition}
\begin{assumption}{\textbf{(A)}}
\label{ass.A}
We impose the following conditions on the coefficients $ b, \sigma, f, h, $ and $ g $.

\assumptionitem{A.1}{A.1} (\textbf{Regularity}) 
\label{A.1}
The functions $ b, \sigma, f, h, $ and $ g $ are measurable in $ t $ and continuous in $(x, \mu, u)$ on 
$[0,T] \times \mathbb{R} \times \mathcal{P}_2(\mathbb{R}^+) \times U$; in addition, $h$ is continuous in $t$.

\assumptionitem{A.2}{A.2}(\textbf{Lipschitz and Growth Conditions}) 
\label{A.2}
There exist constants $ C_1, C_2 > 0 $ such that for all 
$(t,u) \in [0,T] \times U$, $x,y \in \mathbb{R}$, and 
$\mu,\nu \in \mathcal{P}_2(\mathbb{R}^+)$,
\begin{equation*}
|b(t,x,\mu,u)-b(t,y,\nu,u)| + |\sigma(t,x,\mu,u)-\sigma(t,y,\nu,u)|
\le C_1\big(|x-y| + W_{\mathbb{R}^+,2}(\mu,\nu)\big),
\end{equation*}
and
\begin{equation*}
|\sigma(t,x,\mu,u)| + |b(t,x,\mu,u)|
\le C_2\left(1 + |x| + \Big(\int_{\mathbb{R}^+} |z|^2 \mu(\mathrm{d}z)\Big)^{1/2}\right).
\end{equation*}

\assumptionitem{A.3}{A.3} (\textbf{Dependence on the Measure}) 
\label{A.3}
The functions $\sigma^2, f, g, h$ are locally Lipschitz in the measure argument, uniformly in $(t,x,u)$.  
For each $\varphi \in \{\sigma^2, f, g, h\}$, there exists $C_3>0$ such that for all 
$(t,x,u) \in [0,T] \times \mathbb{R} \times U$ and 
$\mu_1,\mu_2 \in \mathcal{P}_2(\mathbb{R}^+)$,
\begin{equation*}
|\varphi(t,x,\mu_1,u) - \varphi(t,x,\mu_2,u)|
\le C_3\big(1 + F(W_{\mathbb{R}^+,2}(\mu_1,\delta_0), W_{\mathbb{R}^+,2}(\mu_2,\delta_0))\big)
W_{\mathbb{R}^+,2}(\mu_1,\mu_2),
\end{equation*}
where $F$ is locally bounded in its arguments.  
\assumptionitem{A.4}{A.4} (\textbf{Polynomial Growth}) 
\label{A.4}
There exists constant $ C_4 > 0 $ such that for all 
$(t,x,\mu,u) \in [0,T] \times \mathbb{R} \times \mathcal{P}_2(\mathbb{R}^+) \times U$,
\begin{equation*}
|g(x,\mu)| \le C_4 \left(1 + |x|^2 + \int_{\mathbb{R}^+} |z|^2 \mu(\mathrm{d}z) \right),
\end{equation*}
\begin{equation*}
|f(t,x,\mu,u)| \le C_4 \left(1 + |x|^2 + \int_{\mathbb{R}^+} |z|^2 \mu(\mathrm{d}z) \right),
\end{equation*}
\begin{equation*}
|h(t,x,\mu)| \le C_4\left(1 + |x| + \Big(\int_{\mathbb{R}^+} |z|^2 \mu(\mathrm{d}z)\Big)^{1/2}\right).
\end{equation*}

\assumptionitem{A.5}{A.5} (\textbf{Control Set}) 
\label{A.5}
The control space $ U $ is compact subset $U\subset\mathbb{R}^d$, $d\geq 1$.

\assumptionitem{A.6}{A.6} (\textbf{Initial Distribution})
\label{A.6}
The initial law $ \lambda $ admits a finite moment of order $ q' > 2 $, i.e.
$\lambda \in \mathcal{P}_{q'}(\mathbb{R}^+)$.
\end{assumption}

\begin{remark}
Assumption \hyperlink{A.3}{(A.3)} is introduced to guarantee the continuity of the cost functional $J$ and of the correspondence $\mathcal{R}$. In particular, when the diffusion coefficient $\sigma$ is bounded and Lipschitz continuous with respect to the measure argument, the associated function $\sigma^2$ automatically satisfies condition \hyperlink{A.3}{(A.3)}.
\end{remark}
\begin{assumption}{\textbf{(V)}}
\label{ass.V}
The diffusion coefficient $\sigma$ is uniformly elliptic: there exists a constant $\beta > 0$ such that
\begin{equation*}
\sigma^{2}(t, x, \mu, u) \geq \beta, 
\quad 
\forall (t, x, \mu, u) \in [0,T]\times\mathbb{R}\times\mathcal{P}_2(\mathbb{R}^+)\times U.
\end{equation*}
\end{assumption}
\begin{assumption}{\textbf{(C)}}
\label{ass.C}
For all $(t, x, \mu) \in [0,T] \times \mathbb{R} \times \mathcal{P}_2(\mathbb{R}^+)$, the subset
\begin{equation*}
\mathcal{S}(t,x,\mu)
:= 
\bigl\{
\bigl(b(t,x,\mu,\alpha),\ \sigma^2(t,x,\mu,\alpha),\ z\bigr)
:\ \alpha \in U,\ z \ge f(t,x,\mu,\alpha)
\bigr\}
\subset \mathbb{R} \times \mathbb{R} \times \mathbb{R}
\end{equation*}
is convex.
\end{assumption}
The following theorems summarize the principal contributions of this work by providing sufficient conditions for the existence of a relaxed solution to the mean field game, as well as for a strict Markovian MFG solution.
\begin{theorem} \label{theomfg}
Under Assumption $\ref{ass.A}$, there exists a relaxed MFG solution.   
\end{theorem}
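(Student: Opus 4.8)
The plan is to realize a relaxed MFG solution as a fixed point of the best-response correspondence $\Phi(\mu):=\{P\circ X^{-1}:P\in\mathcal{R}^*(\mu)\}$ on $\mathcal{P}_2(\mathcal{C}^+)$ and to apply the Kakutani--Fan--Glicksberg fixed-point theorem, following the scheme of Lacker \cite{laker}. I would first treat the bounded case, in which $b$ and $\sigma$ are bounded. The initial task is to exhibit a nonempty, convex, $W_2$-compact set $\mathcal{K}\subset\mathcal{P}_2(\mathcal{C}^+)$ that is stable under $\Phi$. Via Proposition \ref{rep mart}, any control rule is the law of a weak solution of the reflected SDE \eqref{eqrep}, so the state decomposes through the one-dimensional Skorokhod reflection map as $X=\Gamma(Y)$, $K_t=\sup_{s\le t}Y_s^-$, where $Y_t:=x+\int_0^t\int_U b\,Q_s(\mathrm{d}u)\,\mathrm{d}s+\int_0^t\int_U\sigma\, M(\mathrm{d}u,\mathrm{d}s)$ is the unreflected driving process. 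Standard moment estimates (Gronwall together with the Burkholder--Davis--Gundy inequality) using the growth bound in \textbf{(A.3)} give uniform-in-$(P,\mu)$ bounds on $\mathbb{E}[\|Y\|_T^{p}]$ for $p\ge 2$, hence tightness of the laws of $Y$ in $\mathcal{C}^+$; since $\Gamma$ is Lipschitz in the uniform norm, tightness transfers to $(X,K)$ in $\mathcal{C}^+\times\mathcal{A}^+$, while tightness in $\mathcal{U}$ is automatic by compactness. The accompanying uniform higher-moment bound yields the uniform integrability needed for $W_2$-relative compactness, and taking $\mathcal{K}$ to be a suitably large closed convex moment/tightness set makes it $\Phi$-invariant.

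The central task is to verify the hypotheses of Berge's maximum theorem. I would show that $J(\mu,P)$ is jointly continuous in $(\mu,P)$---using continuity of the coefficients \textbf{(A.1)}, continuity of $h$ \textbf{(A.2)}, and the growth bounds \textbf{(A.4)} together with the uniform moments above for uniform integrability---and that the constraint correspondence $\mu\mapsto\mathcal{R}(\mu)$ is continuous with nonempty compact values. Upper hemicontinuity (equivalently, closedness of the graph of $\mathcal{R}$) rests on the stability of the controlled martingale problem under joint weak convergence $(\mu_n,P_n)\to(\mu,P)$: because the coefficients enter through the relaxed control as $\int_U\mathcal{L}\phi(s,X_s,\mu_s,u)\,Q_s(\mathrm{d}u)$ and through $\int_0^\cdot\phi'(X_s)\,\mathrm{d}K_s$, continuity of the coefficients makes $\omega\mapsto\mathcal{M}^{\mu,\phi}_t(\omega)$ continuous, and the defining martingale identity passes to the limit. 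Lower hemicontinuity instead requires, given $\mu_n\to\mu$ and $P\in\mathcal{R}(\mu)$, an explicit construction of approximants $P_n\in\mathcal{R}(\mu_n)$ converging to $P$. I expect the genuinely new difficulty, relative to the non-reflected setting, to be the stability of the reflection constraints. Nonnegativity $X_t\ge 0$ is a closed condition; the Skorokhod minimality condition is handled by observing that $F(\omega):=\int_0^T X_t\,\mathrm{d}K_t$ is nonnegative (as $X\ge 0$ and $K$ is nondecreasing) and lower semicontinuous on $\mathcal{C}^+\times\mathcal{A}^+$, so $\{F=0\}$ is closed and, by the Portmanteau theorem, $P_n(\{F=0\})=1$ forces $P(\{F=0\})=1$.

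With these continuity properties established, Berge's maximum theorem yields that $\mu\mapsto\mathcal{R}^*(\mu)$ is upper hemicontinuous with nonempty compact values. Since $J(\mu,\cdot)$ is linear in $P$ and $\mathcal{R}(\mu)$ is convex---the martingale property, nonnegativity, and Skorokhod condition are each preserved under mixtures---the set $\mathcal{R}^*(\mu)$ of minimizers is convex, and so is its image under the linear push-forward $P\mapsto P\circ X^{-1}$. Thus $\Phi$ is a closed-graph correspondence mapping the compact convex set $\mathcal{K}$ into itself with nonempty, convex, compact values, and the Kakutani--Fan--Glicksberg theorem provides $\mu^*\in\Phi(\mu^*)$. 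By definition there is $P^*\in\mathcal{R}^*(\mu^*)$ with $P^*\circ X^{-1}=\mu^*$, that is, $P^*\in\mathcal{R}^*(P^*\circ X^{-1})$, which is a relaxed MFG solution in the bounded case.

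Finally, for the unbounded case I would remove the boundedness of $b$ and $\sigma$ by truncation, replacing them with bounded approximants $b_n,\sigma_n$ that coincide with $b,\sigma$ on $\{|x|\le n\}$ and satisfy \textbf{(A)} with constants uniform in $n$. Each truncated problem admits a relaxed solution $\mu^n$ by the bounded case. The linear-growth bound \textbf{(A.3)} and the cost bound \textbf{(A.4)} produce second-moment estimates that are uniform in $n$, whence $(\mu^n)$ is tight and relatively $W_2$-compact; extracting a convergent subsequence and re-running the stability arguments of the second step recovers a relaxed MFG solution of the original problem. The delicate points here are the uniform-in-$n$ moment control and the uniform integrability required to keep the cost functional convergent and the mean-field marginals inside $\mathcal{P}_2(\mathcal{C}^+)$, ensuring no mass escapes to infinity in the limit.
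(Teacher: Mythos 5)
Your proposal follows the same overall architecture as the paper: relaxed controls and the controlled martingale problem, Berge's maximum theorem plus the Kakutani--Fan--Glicksberg fixed-point theorem for the bounded case, and truncation of $b,\sigma$ with uniform moment estimates and a stability/recovery-sequence passage to the limit for the unbounded case; your treatment of the Skorokhod minimality constraint under weak limits (nonnegativity and closedness of $\{\int_0^T X_t\,\mathrm{d}K_t=0\}$ via Portmanteau) matches the paper's use of the continuity of $(x,k)\mapsto\int_0^T x_t\,\mathrm{d}k_t$ together with Theorem \ref{convlaw}. The one place where you take a genuinely different route is the compactness of the image of $\mathcal{R}$: you factor the state through the one-dimensional Skorokhod reflection map, $X=\Gamma(Y)$ and $K_t=\sup_{s\le t}Y_s^-$, and transfer tightness from the unreflected driver $Y$ to $(X,K)$ using the Lipschitz continuity of $\Gamma$ in the uniform norm, whereas the paper (Lemma \ref{tightx}) works directly on the reflected equation, combining It\^o's formula with the Skorokhod condition to kill the boundary terms and verifying Aldous's criterion by comparing $X^n$ with the restarted solution $X^{n,\tau}$. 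Your route is arguably cleaner and exploits the explicit one-dimensional reflection map (mind the small slip that $Y$ lives in $\mathcal{C}[0,T]$, not $\mathcal{C}^+$, and that the Gronwall step must be closed using $\|X\|_t\le C(1+\|Y\|_t)$ since the coefficients depend on $X$); the paper's argument avoids the reflection map entirely and would extend more readily to reflections in domains where no Lipschitz Skorokhod map is available. The remaining steps --- lower hemicontinuity via strong solutions driven by the same martingale measure on the extension given by Proposition \ref{rep mart}, convexity and linearity for Kakutani, and the optimality transfer in the truncation limit --- coincide with the paper's proof, though you leave them at sketch level.
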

\begin{theorem}\label{theorem:Markovian-new}
Suppose that Assumptions \ref{ass.A} and \ref{ass.V} hold. 
Then there exists a relaxed \emph{Markovian} MFG solution.  
Moreover, if Assumption \ref{ass.C} also holds, there exists a strict Markovian MFG solution.
\end{theorem}

\begin{proof}
Let $ P \in \mathcal{R}^*(\mu) $, with $ \mu = P \circ X^{-1} $, be a relaxed MFG solution provided by Theorem~\ref{theomfg}.  
We construct from $P$ a Markovian MFG solution that preserves the state marginals and cost, following the arguments of Corollary~3.8 in ~\cite{laker}.

By Theorem~2.5 in ~\cite{Mel}, there exists a measurable mapping
\begin{equation*}
\bar{\sigma} : [0,T] \times \mathbb{R} \times \mathcal{P}(\mathbb{R}) \times \mathcal{P}(U) \longrightarrow \mathbb{R},
\end{equation*}
continuous in $(x,\mu,q)$ for each $t$, such that
\begin{equation*}
\bar{\sigma}^{2}(t,x,\mu,q)
= \int_U \sigma^{2}(t,x,\mu,u)q(\mathrm{d}u)
\quad
\bar{\sigma}^{2}(t,x,\mu,\delta_u)
= \sigma^{2}(t,x,\mu,u).
\end{equation*}
Let $(\Omega,(\mathcal{F}_t),\mathbb{Q})$ be a filtered probability space supporting a process $(X,K)$ and a Wiener process $W$ such that
\begin{equation*}
\mathbb{Q}\circ(X,K,Q)^{-1}=P,
\end{equation*}
and where $(X,K)$ satisfies the reflected SDE
\begin{equation*}
\begin{cases}
\mathrm{d}X_t = \displaystyle \int_U b(t,X_t,\mu_t,u)\,Q_t(du)\,\mathrm{d}t
+ \bar{\sigma}(t,X_t,\mu_t,Q_t)\,\mathrm{d}W_t + \mathrm{d}K_t, \\[4pt]
\mathbb{Q}\circ(X_0 )^{-1}= \lambda, \\
X_t \geq 0,  \quad
\displaystyle\int_0^T X_t \,\mathrm{d}K_t = 0.
\end{cases}
\end{equation*}

Let $X^n$ be the solution of the penalized SDE
\begin{equation*}
\mathrm{d}X_t^n = \int_U b(t,X_t^n,\mu_t,u)\,Q_t(du)\,\mathrm{d}t + n(X_t^n)_{-}\,\mathrm{d}t
+ \bar{\sigma}(t,X_t^n,\mu_t,Q_t)\,\mathrm{d}W_t,\quad
X^n_0= X_0,
\end{equation*}
where $(x)_{-} = \max(0,-x)$ and $ K_t^n := \displaystyle\int_0^t n(X_s^n)_{-}\,\mathrm{d}s$.  
By adapting the arguments of Theorem~4.1 in Słomiński~\cite{Slominski-measurable}, one obtains that
\begin{equation}\label{conv1.1}
P^n := \mathbb{Q}\circ(X^n,K^n,Q)^{-1} \longrightarrow P
\quad\text{in }\mathcal{P}_2(\Omega).
\end{equation}
As in Corollary~3.8 of~\cite{laker}, there exists a measurable function $\hat{q} : [0,T]\times\mathbb{R} \to \mathcal{P}(U)$ such that
\begin{equation*}
\hat{q}(t,X_t^n) = \mathbb{E}^{\mathbb{Q}}[Q_t \mid X_t^n],
\quad \mathrm{d}t\times\mathrm{d}\mathbb{Q}\text{-a.s.}
\end{equation*}
It then follows that, $\mathrm{d}t \times \mathrm{d}\mathbb{Q}$-a.e.,
\begin{equation*}
\int_U b(t,X^n_t,\mu_t,u)\, \hat{q}(t,X^n_t)(\mathrm{d}u)
+ n(\hat{X}^n_t)_{-}
= \mathbb{E}^{\mathbb{Q}}\!\left[
\int_U b(t,X^n_t,\mu_t,u)\, Q_t(\mathrm{d}u)
+ n(\hat{X}^n_t)_{-}
\,\middle|\, X^n_t
\right]
\end{equation*}
and
\begin{equation*}
\bar{\sigma}^{2}(t,X^n_t,\mu_t,\hat{q}(t,X^n_t))
=
\mathbb{E}^{\mathbb{Q}}\!\left[
\bar{\sigma}^{2}(t,X^n_t,\mu_t,Q_t)
\,\middle|\, X^n_t
\right].
\end{equation*} 
By the mimicking result Corollary 3.7 in \cite{brunick}, there exists a filtered probability space 
$(\hat{\Omega}^n,(\hat{\mathcal{F}}^n_t),\hat{\mathbb{Q}}^n)$ 
supporting a Wiener process $\hat{W}^n$ and an adapted process $\hat{X}^n$ satisfying
\begin{equation*}
\mathrm{d}\hat{X}^n_t
= b^1(t,\hat{X}^n_t)\mathrm{d}t
+ n(\hat{X}^n_t)_{-}\,\mathrm{d}t
+ \bar{\sigma}^1(t,\hat{X}^n_t)\mathrm{d}\hat{W}^n_t,
\end{equation*}
such that
\begin{equation}\label{equality1}
\hat{\mathbb{Q}}^n\circ(\hat{X}^n_t)^{-1}
= \mathbb{Q}\circ(X^n_t)^{-1}, \quad \forall t,
\end{equation}
where
\begin{equation*}
b^1(t,x) := \int_U b(t,x,\mu_t,u)\,\hat{q}(t,x)(du),
\quad
\bar{\sigma}^1(t,x) := \bar{\sigma}(t,x,\mu_t,\hat{q}(t,x)).
\end{equation*}
Note that the coefficients $(b^{1}, \bar{\sigma}^{1})$ are merely measurable, possibly discontinuous, and satisfy Condition~1.3 in \cite{Slominski-measurable}. By Theorem~2.1 of \cite{Slominski-measurable}, the sequence $(\hat{X}^{n}, \hat{K}^{n})$ is tight in $\mathcal{C}([0,T], \mathbb{R}^2)$. Moreover, by Assumption~\ref{ass.V}, the coefficient $\bar{\sigma}^{1}$ satisfies the uniform ellipticity condition.\\
Using the same arguments as in the proof of Theorem~3.1 of \cite{Slominski-measurable}, it follows that
\begin{equation}\label{convergence-law-mimicking}
(\hat{X}^n, \hat{K}^n) \to (\hat{X}, \hat{K}) \text{ in law,}
\end{equation}
where 
$(\hat{\Omega},\hat{\mathcal{F}},\hat{\mathbb{F}},\hat{\mathbb{Q}},\hat{W},\hat{X},\hat{K})$ 
is a weak solution of the reflected SDE on $D=(0,\infty)$,
\begin{equation*}
\begin{cases}
\mathrm{d}\hat{X}_t = b^1(t,\hat{X}_t)\,\mathrm{d}t
+ \bar{\sigma}^1(t,\hat{X}_t)\,\mathrm{d}\hat{W}_t + \mathrm{d}\hat{K}_t, \\[4pt]
\hat{X}_t \ge 0, \qquad
\displaystyle \int_0^T \hat{X}_s\,\mathrm{d}\hat{K}_s = 0.
\end{cases}
\end{equation*}

By Itô’s formula, for every $q \ge 1$ such that $2q \in [2,q']$, we obtain 
\begin{equation}\label{Dom-measurable}
\sup_{n} \mathbb{E}^{\mathbb{Q}^n}\left(
\sup_{0 \le t \le T} |\hat{X}^n_t|^{2q}
+ |\hat{K}^n_T|^{2q}
\right) < \infty.
\end{equation}
Another application of Itô’s formula yields
\begin{equation}\label{first-prop}
\hat{\mathbb{P}}
:= \hat{\mathbb{Q}} \circ \big(\hat{X}, \hat{K},
\hat{q}(t,\hat{X}_t)(\mathrm{d}u)\,\mathrm{d}t\big)^{-1}
\in \mathcal{R}(\mu).
\end{equation}
Moreover, combining \eqref{convergence-law-mimicking}, \eqref{equality1}, and \eqref{conv1.1}, for all $t \in [0, T]$, we obtain  
\begin{align} \label{second-prop}
\begin{split}
 \hat{\mathbb{Q}}\circ(\hat{X}_t)^{-1}
&= \lim_{n \to \infty} \hat{\mathbb{Q}}^n\circ(\hat{X}^n_t)^{-1} \\
&= \lim_{n \to \infty} \mathbb{Q}\circ(X^n_t)^{-1}\\
&= \mathbb{Q}\circ(X_t)^{-1}
= P\circ(X_t)^{-1}
= \mu_t.
\end{split}
\end{align}
The Skorokhod's representation theorem, implies that there exists $(\tilde{X}^n, \tilde{K}^n)$ and $ (\tilde{X}, \tilde{K})$ defined on some probability space $(\tilde{\Omega}, \tilde{\mathcal{F}}, \tilde{\mathbb{Q}})$, such that \begin{equation}\label{samelaw2} Law(\tilde{X}^n, \tilde{K}^n) = Law(\hat{X}^n, \hat{K}^n), \quad Law(\tilde{X}, \tilde{K}) = Law(\hat{X}, \hat{K}) \end{equation} 
and \begin{equation}\label{conv2} (\tilde{X}^n, \tilde{K}^n) \to (\tilde{X}, \tilde{K}) \quad \text{$\tilde{\mathbb{Q}}$-a.s.} 
\end{equation} 
Hence, we have
\begin{align} \label{third-prop}
\begin{split}
J(\mu, \hat{\mathbb{P}}) &= \mathbb{E}^{\hat{\mathbb{Q}}} \left[ g(\hat{X}_T, \mu_T) + \int_0^T  \int_U \hat{q}(t, \hat{X}_t)(\mathrm{d}u)\, f(t, \hat{X}_t, \mu_t, u)\mathrm{d}t  + \int_{0}^{T} h(t, \hat{X}_t, \mu )\,\mathrm{d}\hat{K}_t \right]\\
&=\mathbb{E}^{\tilde{\mathbb{Q}}} \left[ g(\tilde{X}_T, \mu_T) + \int_0^T \int_U \hat{q}(t, \tilde{X}_t)(\mathrm{d}u)\, f(t, \tilde{X}_t, \mu_t, u) \mathrm{d}t  + \int_{0}^{T} h(t, \tilde{X}_t, \mu )\,\mathrm{d}\tilde{K}_t \right] \\ &= \lim_{n \to \infty} \mathbb{E}^{\tilde{\mathbb{Q}}} \left[ g(\tilde{X}^n_T, \mu_T) + \int_0^T \int_U \hat{q}(t, \tilde{X}^n_t)(\mathrm{d}u)\, f(t, \tilde{X}^n_t, \mu_t, u) \mathrm{d}t  + \int_{0}^{T} h(t, \tilde{X}^n_t, \mu_t )\,\mathrm{d}\tilde{K}^n_t \right] \\ &= \lim_{n \to \infty} \mathbb{E}^{\mathbb{Q}^n} \left[ g(\hat{X}^n_T, \mu_T) + \int_0^T \int_U \hat{q}(t, \hat{X}^n_t)(\mathrm{d}u)\, f(t, \hat{X}^n_t, \mu_t, u) \mathrm{d}t  + \int_{0}^{T} h(t, \hat{X}^n_t, \mu_t )\,\mathrm{d}\hat{K}^n_t \right] \\ &= \lim_{n \to \infty} \mathbb{E}^{\mathbb{Q}} \left[ g(X^n_T, \mu_T) + \int_0^T \int_U \hat{q}(t, X^n_t)(\mathrm{d}u)\, f(t, X^n_t, \mu_t, u) \mathrm{d}t  + \int_{0}^{T} h(t, X^n_t, \mu_t )\,\mathrm{d}K^n_t \right] \\ &= \lim_{n \to \infty} \mathbb{E}^{\mathbb{Q}} \left[ g(X^n_T, \mu_T) + \int_0^T  \int_U Q_t(\mathrm{d}u)\, f(t, X^n_t, \mu_t, u) \mathrm{d}t  + \int_{0}^{T} h(t, X^n_t, \mu_t )\,\mathrm{d}K^n_t \right] \\ &= \mathbb{E}^{\mathbb{Q}} \left[ g(X_T, \mu_T) + \int_0^T \int_U Q_t(\mathrm{d}u)\, f(t, X_t, \mu_t, u) \mathrm{d}t  + \int_{0}^{T} h(t,X_t, \mu_t )\,\mathrm{d}K_t \right] \\ &= J(\mu, P).
\end{split}
\end{align} 
For the first convergence, we apply \textit{Krylov’s estimate} to the measurable mapping  
\begin{equation*}
x \mapsto \int_{U} f\big(t, x, \mu_t, u\big)\, \hat{q}(t, x)(\mathrm{d}u),
\end{equation*}
in conjunction with the continuity of the functional  
\begin{equation*}
(x, k) \mapsto \int_{0}^{T} h(t, x_t, \mu_t)\,\mathrm{d}k_t,
\end{equation*}
and condition~\eqref{Dom-measurable}. 
Furthermore, the uniform growth bound follows from assumption \hyperlink{A.4}{(A.4)} 
\begin{equation*}
\left| 
\int_{0}^{T}\!\!\int_{U} f\big(t, x, \mu_t, u\big)\, \hat{q}(t, x)(\mathrm{d}u)
+ \int_{0}^{T} h(t, x_t, \mu_t)\,\mathrm{d}k_t 
\right|
\le C\big(1+\|x\|_T^2+\|k\|_T^2\big)
\end{equation*}
guarantees the necessary integrability, thereby justifying the passage to the limit via the dominated convergence theorem.
The final convergence follows from~\eqref{conv1.1}.

Therefore, by combining \eqref{first-prop}, \eqref{second-prop}, and \eqref{third-prop}, we conclude that $\hat{\mathbb{P}}$ constitutes a relaxed Markovian MFG solution.

Finally, under Assumption \ref{ass.C}, measurable selection arguments (as in Theorem~3.7 of Lacker~\cite{laker}) provide measurable functions
\begin{equation*}
\hat{\alpha} : [0,T]\times\mathbb{R} \to U,
\qquad
\hat{v} : [0,T]\times\mathbb{R} \to \mathbb{R}^+,
\end{equation*}
satisfying, for every $t$ and almost every $\omega$,
\begin{equation*}
\int_U \hat{q}(t,\hat{X}_t(\omega))(du)\,(b,\sigma^{2},f)(t,\hat{X}_t(\omega),\mu_t,u)
=
(b,\sigma^{2},f)(t,\hat{X}_t(\omega),\mu_t,\hat{\alpha}(t,\hat{X}_t(\omega)))
+
(0,0,\hat{v}(t,\hat{X}_t(\omega))).
\end{equation*}
The relaxed control can thus be replaced by the strict control $u_t = \hat{\alpha}(t,\hat{X}_t)$ without increasing the cost.  
Defining
\begin{equation*}
\hat{\mathbb{P}}'
:= \hat{\mathbb{P}}\circ(\hat{X},\hat{K},\delta_{\hat{\alpha}(t,\hat{X}_t)}(\mathrm{d}u)\mathrm{d}t)^{-1},
\end{equation*}
we obtain that $\hat{\mathbb{P}}'$ is a strict Markovian MFG solution.
\end{proof}
\begin{remark}
Assumptions \ref{ass.A} and \ref{ass.C}, together with the reasoning in point (ii) of Remark \ref{strict-control.}, imply the existence of a strict MFG solution.
\end{remark}
\section{Proof of Theorem \ref{theomfg}}\label{proof of theorem-mfg}
According to Definition~\ref{definition2}, a probability distribution 
$ P \in \mathcal{P}_2(\Omega) $ is said to be a relaxed MFG solution if it constitutes a fixed point of the correspondence 
\begin{equation*}
\Phi : \mathcal{P}_2(\mathcal{C}^+) \rightrightarrows \mathcal{P}_2(\mathcal{C}^+), 
\qquad 
\mu \mapsto \Phi(\mu) := \{ P \circ X^{-1} : P \in \mathcal{R}^*(\mu) \}.
\end{equation*}
Hence, the proof of Theorem~\ref{theomfg} reduces to establishing the existence 
of a fixed point of $ \Phi $. To this end, we apply Kakutani’s fixed-point theorem.  

The verification of Kakutani’s assumptions relies on Berge’s maximum theorem, which requires that the correspondence $ \mathcal{R} $ be 
continuous (as shown in Lemmas~\ref{lemma-lower} and~\ref{lemma-upper}), compact-valued, and that the cost functional $ J $ be continuous on the graph of $ \mathcal{R} $,
\begin{equation*}
\mathrm{Gr}(\mathcal{R}) := 
\{ (\mu, P) \in \mathcal{P}_2(\mathcal{C}^+) \times \mathcal{P}_2(\Omega) 
: P \in \mathcal{R}(\mu) \},
\end{equation*}
as established in Lemma~\ref{contcost}.  

By Definition~\ref{def of set-valued conti}, the correspondence $ \mathcal{R} $
is continuous if and only if it is both \emph{lower} and \emph{upper hemicontinuous}. 
These two properties are verified in Lemmas~\ref{lemma-lower} and~\ref{lemma-upper}, respectively.

\begin{lemma}\label{lemma-lower}
$\mathcal{R}$ is lower hemicontinuous. 
\end{lemma}
\begin{proof}
Let $(\mu^n, \mu)$ be a sequence in $\mathcal{P}_2(\mathcal{C}^+) \times \mathcal{P}_2(\mathcal{C}^+)$ such that $\mu^n \to \mu$, and let $P \in \mathcal{R}(\mu)$.  
To prove that the correspondence $\mathcal{R}$ is lower hemicontinuous, it suffices by point~(i) of Definition~\ref{def of set-valued conti} to show that there exists a sequence $P^n \in \mathcal{R}(\mu^n)$ satisfying $P^n \to P$ as $n \to \infty$.

By Proposition \ref{rep mart}, there exists a filtered probability space $ (\tilde{\Omega}, \tilde{\mathcal{F}}, \tilde{\mathcal{F}}_t, \mathbb{Q}) $ on which stochastic processes $(\tilde{X}, \tilde{K}, \tilde{Q}, \tilde{M} )$ are defined such that $(\tilde{X}, \tilde{K}) $ are $\tilde{\mathcal{F}}_t$-adapted, $ \tilde{M} $ $\tilde{\mathcal{F}}_t $-martingale measures on $ U \times [0, T] $ with intensity $\tilde{Q} $, $ P=P \circ (X, Q, K)^{-1} = \mathbb{Q}\circ (\tilde{X}, \tilde{Q}, \tilde{K})^{-1} $, and the state equation \eqref{eqrep} holds on $ \tilde{\Omega} $.

Let $ (\tilde{X}^n, \tilde{K}^n)$ be the strong solution to the following reflected SDE, whose existence follows from the Lipchitz condition in Assumption \hyperlink{A.2}{(A.2)} (see \cite{ elkarouiexistence}).
\begin{equation}
\begin{cases} 
\tilde{X}^n_{t} =\tilde{X}_{0}+ \displaystyle \int_{0}^{t}\int_{U}^{} b(s,\tilde{X}^n_{s}, \mu^n_s, u)\tilde{Q}_{s}(\mathrm{d}u)\mathrm{d}s +\int_{0}^{t} \int_{U}^{} \sigma(s,\tilde{X}^n_{s},\mu^n_s, u)\tilde{M}(\mathrm{d}u, \mathrm{d}s) + \tilde{K}^n_{t}; \\
\tilde{X}^n_{t} \geq 0 \,\, \text{a.s};\\
\displaystyle\int_{0}^{T}  \tilde{X}^n_{t} \mathrm{d}\tilde{K}^n_{t} = 0 \,\, \text{a.s}.
\end{cases}
\end{equation}

Applying Itô’s formula to the continuous semi-martingale $|\tilde{X}^n - \tilde{X}|^2$ yields to : 
\begin{align*}
\mathrm{d}|\tilde{X}^n_t - \tilde{X}_t|^2 &= 2\int_{U}^{}\big(\tilde{X}^n_t - \tilde{X}_t\big)\Big(b(t,\tilde{X}^n_t, \mu^n_t, u)-b(t,\tilde{X}_t, \mu_t, u)\Big)\tilde{Q}_t(\mathrm{d}u)\mathrm{d}t \\ & +2 \int_{U}^{}\big(\tilde{X}^n_t - \tilde{X}_t\big)\Big(\sigma(t,\tilde{X}^n_t, \mu^n_t, u)-\sigma(t,\tilde{X}_t, \mu_t, u)\Big)\tilde{M}(\mathrm{d}t, \mathrm{d}u) \\ & + \int_{U}^{}\big|\sigma(t,\tilde{X}^n_t, \mu^n_t, u)-\sigma(t,\tilde{X}_t, \mu_t, u)\big|^2\tilde{Q}_t(\mathrm{d}u) \mathrm{d}t\\ 
& + 2 (\tilde{X}^n_t - \tilde{X}_t)\mathrm{d}\tilde{K}^n_t -2 (\tilde{X}^n_t - \tilde{X}_t)\mathrm{d}\tilde{K}_t.
\end{align*} 
The Skorokhod conditions imply that $ (\tilde{X}^n_t - \tilde{X}_t) \,\mathrm{d}\tilde{K}^n_t \leq 0 $ and $ (\tilde{X}^n_t - \tilde{X}_t) \,\mathrm{d}\tilde{K}_t \geq 0 $. Then, by taking the supremum over $[0, T]$ and the expectation, we obtain:
\begin{equation}
\begin{split}
    \mathbb{E}^{\mathbb{Q}}\left(\sup_{t \in [0, T]}\big|\tilde{X}^n_t - \tilde{X}_t\big|^2\right) &\leq 2\mathbb{E}^{\mathbb{Q}}\left(\int_{0}^{T}\int_{U}^{}\big|(\tilde{X}^n_t - \tilde{X}_t)(b(t,\tilde{X}^n_t, \mu^n_t, u)-b(t,\tilde{X}_t, \mu_t, u))\big|\tilde{Q}_t(\mathrm{d}u)\mathrm{d}t \right) \\  +2\mathbb{E}^{\mathbb{Q}}&\left (\sup_{s \in [0, T]}\Big|\int_0^s \int_{U}^{}(\tilde{X}^n_t - \tilde{X}_t)(\sigma(t,\tilde{X}^n_t, \mu^n_t, u)-\sigma(t,\tilde{X}_t, \mu_t, u))\tilde{M}(\mathrm{d}t, \mathrm{d}u)\Big|\right) \\  +\mathbb{E}^{\mathbb{Q}}\Big(&\int_{0}^{T}\int_{U}^{}\big|\sigma(t,\tilde{X}^n_t, \mu^n_t, u)-\sigma(t,\tilde{X}_t, \mu_t, u)\big|^2\tilde{Q}_t(\mathrm{d}u)\mathrm{d}t\Big) \label{suptherm}.
    \end{split}
\end{equation}
By applying the Burkholder-Davis-Gundy inequality, we obtain a constant $ C > 0$, independent of $ n $, such that 
\begin{equation}
\begin{split}
&\mathbb{E}^{\mathbb{Q}}\left (\sup_{s \in [0, T]}\Big|\int_0^s \int_{U}^{}(\tilde{X}^n_t - \tilde{X}_t)(\sigma(t,\tilde{X}^n_t, \mu^n_t, u)-\sigma(t,\tilde{X}_t, \mu_t, u))\tilde{M}(\mathrm{d}t, \mathrm{d}u)\Big| \right ) \\
&\leq C \mathbb{E}^{\mathbb{Q}}\left ( \int_{U}^{} \int_{0}^{T}\big|\tilde{X}^n_t -\tilde{X}_t\big|^2 \big|\sigma(t,\tilde{X}^n_t, \mu^n_t, u)-\sigma(t,\tilde{X}_t, \mu_t, u)\big|^2\tilde{Q}_t(\mathrm{d}u)\mathrm{d}t \right)^{1/2}\\
&\leq C\epsilon \mathbb{E}^{\mathbb{Q}}\left (\sup_{t \in [0, T]}\big|\tilde{X}^n_t -\tilde{X}_t\big|^2 \right )+\dfrac{C}{\epsilon} \mathbb{E}^{\mathbb{Q}}\left( \int_{0}^{T}\int_{U}^{}\big|\sigma(t,\tilde{X}^n_t, \mu^n_t, u)-\sigma(t,\tilde{X}_t, \mu_t, u)\big|^2\tilde{Q}_t(\mathrm{d}u)\mathrm{d}t\right) \label{marttherm}
\end{split}
\end{equation}
By plugging \eqref{marttherm} into \eqref{suptherm}, selecting a suitable $\epsilon$ and relying on the assumption \hyperlink{A.2}{(A.2)}, we deduce  
\begin{align*}
\mathbb{E}^{\mathbb{Q}}\left(\sup_{t \in [0, T]}\big|\tilde{X}^n_t -\tilde{X}_t\big|^2 \right)  &\leq C\mathbb{E}^{\mathbb{Q}}\left(\int_0^{T} \int_{U}^{}\Big| (\tilde{X}^n_s - \tilde{X}_{s})(b(s,\tilde{X}^n_s, \mu^n_s, u)-b(s,\tilde{X}_s, \mu_s, u))\Big|\tilde{Q}_s(\mathrm{d}u)\mathrm{d}s\right) \\
&+C\mathbb{E}^{\mathbb{Q}}\left(\int_{0}^{T} \int_{U}^{} \bigl|\sigma(s,\tilde{X}^n_s, \mu^n_s, u)-\sigma(s,\tilde{X}_s, \mu_s, u)\big|^2\tilde{Q}_s(\mathrm{d}u)\mathrm{d}s\right) \\
&\leq C\mathbb{E}^{\mathbb{Q}}\left(\int_0^{T}\int_U^{} (\big|\tilde{X}^n_s - \tilde{X}_{s}\big|^2+\big|b(s,\tilde{X}^n_s, \mu^n_s, u)-b(s,\tilde{X}_s, \mu_s, u)\big|^2)\tilde{Q}_s(\mathrm{d}u)\mathrm{d}s\right) \\
&+C\mathbb{E}^{\mathbb{Q}}\left(\int_{0}^{T} \int_{U}^{} \big|\sigma(s,\tilde{X}^n_s, \mu^n_s, u)-\sigma(s,\tilde{X}_s, \mu_s, u)\big|^2\tilde{Q}_s(\mathrm{d}u)\mathrm{d}s\right) \\
&\leq C \mathbb{E}^{\mathbb{Q}}\left(\int_0^{T} \big|\tilde{X}^n_s - \tilde{X}_{s}\big|^2\mathrm{d}s\right) + C\int_0^T W_2(\mu^n_s, \mu_s)^2\mathrm{d}s.
\end{align*}
From Gronwall’s inequality and Lemma \ref{sumconv}, we obtain
\begin{equation}\label{lower1}
\mathbb{E}^{\mathbb{Q}} \left( \sup_{t \in [0, T]} |\tilde{X}^n_t - \tilde{X}_t |^2\right) \to 0 .
\end{equation}
Similarly, by using the fact that 
\begin{multline*}
\tilde{K}^n_t-\tilde{K}_t=\int_0^t \int_{U}^{} (b(s,\tilde{X}^n_{s},\mu^n_s, u)-b(s,\tilde{X}_{s},\mu_s, u))\tilde{Q}_{s}(\mathrm{d}u)\mathrm{d}s\\+\int_0^t \int_{U}^{}(\sigma(s,\tilde{X}^n_{s},\mu^n_s, u)-\sigma(s,\tilde{X}_{s},\mu_s, u))\tilde{M}(\mathrm{d}u,\mathrm{d}s)
\end{multline*}
we can show that
\begin{equation} \label{lower2}
\mathbb{E}^{\mathbb{Q}} \left(\sup_{t \in [0, T]}|\tilde{K}^n_t - \tilde{K}_t|^2\right) \to 0 .
\end{equation}
Define $ P^n := \mathbb{Q} \circ (\tilde{Q}, \tilde{X}^n, \tilde{K}^n)^{-1} $, \eqref{lower1} and \eqref{lower2} implies that $P^n \to P$ in $\mathcal{P}_2(\Omega)$ and by using Itô's formula, we can verify that $ P_n \in \mathcal{R}(\mu_n) $. 
\end{proof}

\begin{lemma} \label{lemma-upper}
The correspondence $\mathcal{R}$ is upper hemicontinuous, and for every $\mu \in \mathcal{P}_{2}(\mathcal{C}^+)$, the set $\mathcal{R}(\mu)$ is compact.
\end{lemma}
\begin{proof}
\textbf{Step 1:} We first show that for each $\mu \in \mathcal{P}_{2}(\mathcal{C}^+)$, the set $\mathcal{R}(\mu)$ is closed.  

Let $(P^n)_{n \ge 1}$ be a sequence in $\mathcal{R}(\mu)$ such that $P^n \to P$ in $\mathcal{P}_2(\Omega)$.  
Define $\Psi(x,k) := \int_0^T x_t\, \mathrm{d}k_t$, which is continuous under uniform convergence and satisfies
\begin{equation*}
|\Psi(x,k)| \le C\!\left(1 + \|x\|_T^2 + \|k\|_T^2\right).
\end{equation*}
By (iv) of Theorem~\ref{convlaw},
\begin{equation*}
0 = \mathbb{E}^{P^n}\!\left(\int_0^T X_t\, \mathrm{d}K_t\right) 
\longrightarrow 
\mathbb{E}^{P}\!\left(\int_0^T X_t\, \mathrm{d}K_t\right),
\end{equation*}
and since $\int_0^T x_t\, \mathrm{d}k_t \ge 0$ for all $(x,k)\in \mathcal{C}^+\times\mathcal{A}^+$, we deduce
\begin{equation*}
\int_0^T X_t\, \mathrm{d}K_t = 0 \quad P\text{-a.s.}
\end{equation*}
The initial condition is also preserved:
\begin{equation*}
P \circ (X_0)^{-1} = \lim_{n \to \infty} P^n \circ (X_0)^{-1} = \lambda.
\end{equation*}

Let $0 \le s \le t \le T$, $\psi \in \mathcal{C}_0^{\infty}$, and let $H$ be a bounded $\mathcal{F}_s$-measurable random variable. Define
\begin{equation*}
\mathcal{M}_t^{\mu,\psi} 
:= \psi(X_t) 
- \int_0^t \mathcal{L}\psi(s,X_s,\mu_s,u)\, Q_s(\mathrm{d}u)\, \mathrm{d}s 
- \int_0^t \psi'(X_s)\, \mathrm{d}K_s,
\end{equation*}
with
\begin{equation*}
\mathcal{L}\psi(s,x,\mu,u)
= \tfrac{1}{2}\sigma^2(s,x,\mu,u)\psi''(x) + b(s,x,\mu,u)\psi'(x),
\end{equation*}
by the second point of Corollary~A.5 in \cite{laker}, the map 
\begin{equation*}
(q,x,k) \mapsto \psi(x_t) - \int_0^t\! \mathcal{L}\psi(s,x_s,\mu_s,u)\, q_s(\mathrm{d}u)\, \mathrm{d}s - \int_0^t \psi'(x_s)\, \mathrm{d}k_s
\end{equation*}
is continuous and quadratically bounded. Hence, by (iv) of Theorem~\ref{convlaw}, we obtain
\begin{equation*}
0 = \lim_{n \to \infty} 
\mathbb{E}^{P^n}\!\big[(\mathcal{M}_t^{\mu,\psi} - \mathcal{M}_s^{\mu,\psi}) H\big]
= 
\mathbb{E}^{P}\!\big[(\mathcal{M}_t^{\mu,\psi} - \mathcal{M}_s^{\mu,\psi}) H\big].
\end{equation*}
Thus $P$ satisfies the martingale property of $\mathcal{R}(\mu)$, and we conclude that $P \in \mathcal{R}(\mu)$.

\textbf{Step 2:} Next, let $\mu^n \to \mu$ in $\mathcal{P}_{2}(\mathcal{C}^+)$, and suppose that $P^n \in \mathcal{R}(\mu^n)$ for each $n$. 
Since $\mathcal{R}(\mu)$ is closed for every $\mu$, it follows from point~(ii) of Definition~\ref{def of set-valued conti} that the correspondence $\mathcal{R}$ is upper hemicontinuous whenever the sequence $(P^n)$ admits a limit point $P \in \mathcal{R}(\mu)$.

By Proposition~\ref{rep mart}, there exists a filtered probability space 
$(\Omega^n, \mathbb{F}^n = (\mathcal{F}_t^n), \mathbb{P}^n)$ supporting 
$\mathbb{F}^n$-adapted processes $(X^n, K^n)$ and an $\mathbb{F}^n$-martingale measure 
$M^n$ on $U \times [0,T]$ with intensity $Q^n$, such that 
$P^n = \mathbb{P}^n \circ (X^n, Q^n, K^n)^{-1}$ and
\begin{equation}\label{tight1}
\begin{cases}
\mathrm{d}X^n_t = \displaystyle\int_U b(t, X^n_t, \mu^n_t, u) Q^n_t(\mathrm{d}u)\, \mathrm{d}t 
+ \displaystyle\int_U \sigma(t, X^n_t, \mu^n_t, u) M^n(\mathrm{d}u, \mathrm{d}t)
+ \mathrm{d}K^n_t, & t \in [0,T], \\[0.4em]
X^n_0 \sim \lambda, \quad X^n_t \ge 0 \text{ a.s.}, \quad
\displaystyle\int_0^T X^n_t\, \mathrm{d}K^n_t = 0 \text{ a.s.}
\end{cases}
\end{equation}

We first show that $(P^n)$ is relatively compact in $\mathcal{P}_{2}(\Omega)$.  
By Lemma~A.2 in \cite{laker}, it suffices to prove that 
$\{\mathbb{P}^n \circ (X^n)^{-1}\}_{n \ge 1}$, 
$\{\mathbb{P}^n \circ (Q^n)^{-1}\}_{n \ge 1}$, and 
$\{\mathbb{P}^n \circ (K^n)^{-1}\}_{n \ge 1}$ 
are relatively compact.  

Since $\mathcal{U}$ is compact, 
$\{\mathbb{P}^n \circ (Q^n)^{-1}\}_{n \ge 1}$ 
is tight, so the sequence is relatively compact under the 2-Wasserstein topology.  
It remains to establish compactness for 
$\{ \mathbb{P}^n \circ (X^n)^{-1} \}$ 
and $\{ \mathbb{P}^n \circ (K^n)^{-1} \}$.  
Using condition (iii) of Definition 6.8 in \cite{ced}, we verify uniform $L^{q'}$-boundedness and tightness of these sequences.

\textbf{Step 2.1:} For $q \geq 1$ with $2q \in [2, q']$, 
we show that 
$\mathbb{E}^{\mathbb{P}^n}(\sup_{0 \le t \le T}|X^n_t|^{2q})$ 
and 
$\mathbb{E}^{\mathbb{P}^n}(\sup_{0 \le t \le T}|K^n_t|^{2q})$ 
are uniformly bounded.  

Applying Itô’s formula and the Skorokhod condition gives
\begin{align*}
|X^n_t|^2 
&= |X^n_0|^2 
+ 2\int_0^t \int_U b(s, X^n_s, \mu^n_s, u) X^n_s Q^n_s(\mathrm{d}u)\, \mathrm{d}s
+ 2\int_0^t \int_U \sigma(s, X^n_s, \mu^n_s, u) X^n_s M^n(\mathrm{d}u, \mathrm{d}s) \\
&\quad + \int_0^t\int_U \sigma^2(s, X^n_s, \mu^n_s, u) Q^n_s(\mathrm{d}u)\, \mathrm{d}s.
\end{align*}
Hence, for some $C>0$,
\begin{align*}
|X^n_t|^{2q} 
\le C \Big(& |X^n_0|^{2q}
+ \Big|\int_0^t\int_U b(s, X^n_s, \mu^n_s, u) X^n_s Q^n_s(\mathrm{d}u)\, \mathrm{d}s\Big|^q
+ \Big|\int_0^t \int_U \sigma(s, X^n_s, \mu^n_s, u) X^n_s M^n(\mathrm{d}u, \mathrm{d}s)\Big|^q \\
& + \Big|\int_0^t \int_U \sigma^2(s, X^n_s, \mu^n_s, u) Q^n_s(\mathrm{d}u)\, \mathrm{d}s\Big|^q \Big).
\end{align*}
Taking the supremum over $[0, T]$ and expectations, applying the BDG inequality, and using 
$2ab \le \frac{a^2}{\varepsilon} + \varepsilon b^2$ together with the growth bounds on $b$ and $\sigma$, we derive
\begin{align*}
\mathbb{E}^{\mathbb{P}^n}\left(\sup_{0 \le t \le T}|X^n_t|^{2q}\right)
&\le C\left(
1 + \mathbb{E}^{\mathbb{P}^n}(|X^n_0|^{2q})
+ \int_0^T \Big(\int_{\mathbb{R}^+} |z|^2 \mu^n_t(\mathrm{d}z)\Big)^{q} \mathrm{d}t
+ \mathbb{E}^{\mathbb{P}^n}\left(\int_0^T |X^n_s|^{2q}\mathrm{d}s\right)
\right)\\
&\quad + \tfrac{1}{2}\mathbb{E}^{\mathbb{P}^n}\left(\sup_{0 \le s \le T}|X^n_s|^{2q}\right).
\end{align*}  
Since $\mu^n \to \mu$ in $\mathcal{P}_{2}(\mathcal{C}^+)$, it follows that $\int_{\mathbb{R}^+} |z|^2 \, \mu_t^n(\mathrm{d}z)$ is uniformly bounded in $n$. Applying Gronwall’s inequality and using that $\lambda \in \mathcal{P}_{q'}(\mathbb{R}^+)$, we obtain the desired estimate,

\begin{equation}\label{eqq22}
\mathbb{E}^{\mathbb{P}^n}\left[\sup_{0 \le t \le T}|X^n_t|^{2q}\right] \le C
\end{equation}
from
\begin{equation}\label{k^n}
K^n_t = X^n_t - X^n_0 - \int_U b(t, X^n_t, \mu^n_t, u) Q^n_t(\mathrm{d}u)\, \mathrm{d}t 
- \int_U \sigma(t, X^n_t, \mu^n_t, u) M^n(\mathrm{d}u, \mathrm{d}t),
\end{equation}
by similar arguments, we obtain uniform boundedness of 
$\mathbb{E}^{\mathbb{P}^n}\left(\sup_{0 \le t \le T}|K^n_t|^{2q}\right)$.\\

\textbf{Step 2.2:} We now establish the tightness of the sequence in $\mathcal{C}$. 
According to Aldous’s criterion (see \cite{belg}, Thm.~16.10), it is sufficient to verify that
\begin{equation*}
\lim_{\delta \downarrow 0} 
\sup_{n} \sup_{\tau \in \mathcal{T}}
\mathbb{E}^{\mathbb{P}^n}\!\left( \left| Z_{(\tau+\delta)\wedge T} - Z_{\tau} \right|^2 \right) = 0 \quad \text{for } Z=\{X^n, K^n\},
\end{equation*}
where $\mathcal{T}$ denotes the set of all $[0,T]$-valued stopping times.

Fix $\tau \in \mathcal{T}$, and let $(X^{n,\tau}, K^{n,\tau})$ denote the solution to the reflected SDE driven by $(Q^n, M^n)$ with initial condition $X^n_\tau$.
By the same arguments as before,
\begin{equation}\label{eqq33}
\mathbb{E}^{\mathbb{P}^n}\left(\sup_{0 \le t \le T}(X^{n,\tau}_t)\right) \le C.
\end{equation}
Applying Itô’s formula to $|X^n_t - X^{n,\tau}_t|^2$ gives
\begin{align*}
\mathrm{d}|X^n_t - X^{n,\tau}_t|^2
&= 2(X^n_t - X^{n,\tau}_t)\Big(\int_{U}\big(b(t,X^n_t, \mu^n_t, u) - b(t,X^{n,\tau}_t, \mu^n_t,u)\big)Q^n_t(\mathrm{d}u)\, \mathrm{d}t\Big) \\
&\quad + 2(X^n_t - X^{n,\tau}_t)\!\int_U \Big(\sigma(t,X^n_t,\mu^n_t,u) - \sigma(t,X^{n,\tau}_t,\mu^n_t,u)\Big) M^n(\mathrm{d}u,\mathrm{d}t) \\
&\quad + \int_U \Big(\sigma(t,X^n_t,\mu^n_t,u) - \sigma(t,X^{n,\tau}_t,\mu^n_t,u)\Big)^2 Q^n_t(\mathrm{d}u)\, \mathrm{d}t \\
&\quad + 2(X^n_t - X^{n,\tau}_t)\,\mathrm{d}K^n_t - 2(X^n_t - X^{n,\tau}_t)\,\mathrm{d}K^{n,\tau}_t.
\end{align*}
By Skorokhod’s condition, the last term is non-positive.  
Using the Lipschitz continuity of $b$ and $\sigma$ assumption \hyperlink{A.2}{(A.2)}, we get
\begin{equation*}
\mathbb{E}^{\mathbb{P}^n}\left(|X^n_{(\tau+\delta)\wedge T} - X^{n,\tau}_{(\tau+\delta)\wedge T}|^2\right)
\le C \mathbb{E}^{\mathbb{P}^n} \Big(\!\int_\tau^{(\tau+\delta)\wedge T}
|X^n_t - X^{n,\tau}_t|^2 \, \mathrm{d}t \Big).
\end{equation*}
Hence,
\begin{equation*}
\mathbb{E}^{\mathbb{P}^n}\left(|X^n_{(\tau+\delta)\wedge T} - X^{n,\tau}_{(\tau+\delta)\wedge T}|^2\right)
\le 2C\delta
\left(\mathbb{E}^{\mathbb{P}^n}\big[\sup_t|X^n_t|^2\big] 
+ \mathbb{E}^{\mathbb{P}^n}\big[\sup_t|X^{n,\tau}_t|^2\big]\right).
\end{equation*}
By uniqueness of the reflected SDE~\eqref{tight1}, $X^{n,\tau} = X^n_{\cdot \wedge \tau}$, then
\begin{equation}\label{eqq444}
\mathbb{E}^{\mathbb{P}^n}\left(|X^n_{(\tau+\delta)\wedge T} - X^n_\tau|^2\right)
\le 4C\delta\, \mathbb{E}^{\mathbb{P}^n}\left(\sup_{0 \le t \le T}|X^n_t|^2\right)
\le l(\delta),
\end{equation}
where $l(\delta) \to 0$ as $\delta \to 0$.  
An analogous argument using~\eqref{k^n} yields the tightness of $(K^n)_n$.\\

\textbf{Step 2.3:} Let $P$ be any weak limit of $(P^n)$ in $\mathcal{P}_{2}(\Omega)$.  
Then $P \circ (X_0)^{-1} = \lambda$ and $\int_0^T X_t\, \mathrm{d}K_t = 0$ $P$-a.s.  

For $0 \le s \le t \le T$, $\psi \in \mathcal{C}_0^{\infty}$, and bounded continuous $\mathcal{F}_s$-measurable $H$, define
\begin{equation*}
\begin{aligned}
&\mathbb{E}^{P^n}\left[\left(\int_0^t \mathcal{L}\psi(s,X_s,\mu^n_s,u)\, Q_s(\mathrm{d}u)\, \mathrm{d}s\right) H\right]
- \mathbb{E}^{P}\left[\left(\int_0^t \mathcal{L}\psi(s,X_s,\mu_s,u)\, Q_s(\mathrm{d}u)\, \mathrm{d}s\right) H\right] \\
&= \mathbb{E}^{P^n}\left[ \left( \int_0^t \mathcal{L}\psi(s,X_s,\mu^n_s,u)\, Q_s(\mathrm{d}u)\, \mathrm{d}s - \int_0^t \mathcal{L}\psi(s,X_s,\mu_s,u)\, Q_s(\mathrm{d}u)\, \mathrm{d}s \right) H \right]\\ &+\mathbb{E}^{P^n}\left[ \left( \int_0^t \mathcal{L}\psi(s,X_s,\mu_s,u)\, Q_s(\mathrm{d}u)\, \mathrm{d}s \right) H \right]-\mathbb{E}^{P}\left[ \left( \int_0^t \mathcal{L}\psi(s,X_s,\mu_s,u)\, Q_s(\mathrm{d}u)\, \mathrm{d}s \right) H \right]\\
&= J^n_1 + J^n_2.
\end{aligned}
\end{equation*}
From \hyperlink{A.2}{(A.2)} and \hyperlink{A.3}{(A.3)}, we have the following estimate for $J^n_1$
\begin{align*}
|J^n_1|
&\le C_{\psi'} \int_0^t W_{\mathbb{R}^+,2}(\mu^n_s,\mu_s)\, \mathrm{d}s
+ C_{\psi''}\int_0^t (1+F(W_{\mathbb{R}^+,2}(\mu^n_s,\delta_0),W_{\mathbb{R}^+,2}(\mu_s,\delta_0)))\, 
W_{\mathbb{R}^+,2}(\mu^n_s,\mu_s)\, \mathrm{d}s \\
&\le C_{\psi'} \int_0^t W_{\mathbb{R}^+,2}(\mu^n_s,\mu_s)\, \mathrm{d}s\\
&+ C_{\psi''}\left(\int_0^t (1+F(W_{\mathbb{R}^+,2}(\mu^n_s,\delta_0),W_{\mathbb{R}^+,2}(\mu_s,\delta_0))^2 \mathrm{d}s\right)^{1/2}
\left(\int_0^t W_{\mathbb{R}^+,2}(\mu^n_s,\mu_s)^2 \mathrm{d}s\right)^{1/2}.
\end{align*}
Since $\sup_n W_{\mathbb{R}^+,2}(\mu^n_t,\delta_0)<\infty$ and $F$ is locally bounded, Lemma~\ref{sumconv} implies $J^n_1 \to 0$. Using the same reasoning as in \textbf{Step 1}, we get $J^n_2 \to 0$.\\
Moreover, the mapping 
$(x,k) \mapsto \psi(x_t) - \int_0^t \psi'(x_s)\,\mathrm{d}k_s$
is continuous and satisfies
$$|\psi(x_t) - \int_0^t \psi'(x_s)\,\mathrm{d}k_s| \le C_{\psi,\psi'}(1+\|k\|_T^2).$$  
By (iv) of Theorem~\ref{convlaw},
\begin{equation*}
\lim_{n \to \infty}
\mathbb{E}^{P^n}\left[\left(\psi(X_t)-\int_0^t \psi'(X_s)\,\mathrm{d}K_s\right)H\right]
= 
\mathbb{E}^{P}\left[\left(\psi(X_t)-\int_0^t \psi'(X_s)\,\mathrm{d}K_s\right)H\right].
\end{equation*}
By combining both limits, we obtain
\begin{equation*}
0 = \lim_{n \to \infty}
\mathbb{E}^{P^n}\big[(\mathcal{M}_t^{\mu^n,\psi}-\mathcal{M}_s^{\mu^n,\psi})H\big]
= 
\mathbb{E}^{P}\big[(\mathcal{M}_t^{\mu,\psi}-\mathcal{M}_s^{\mu,\psi})H\big].
\end{equation*}
Therefore, $P$ satisfies the martingale property defining $\mathcal{R}(\mu)$, and thus 
\begin{equation*}
P \in \mathcal{R}(\mu).
\end{equation*}
We therefore conclude that $\mathcal{R}$ is upper hemicontinuous. 
Moreover, the compactness of $\mathcal{R}(\mu)$ for each $\mu$ follows from the same arguments as in \textbf{Step~2}.
\end{proof}
\begin{lemma} \label{contcost}
Under Assumption \ref{ass.A}, the mapping 
$ J: \mathrm{Gr}(\mathcal{R}) \to \mathbb{R} $
is continuous.
\end{lemma}
\begin{proof}
Let $\{(\mu^n, P^n)\}_{n \ge 1} \subset \mathrm{Gr}(\mathcal{R})$ be a sequence such that 
$P^n \in \mathcal{R}(\mu^n)$ for each $n$, and 
\begin{equation*}
(\mu^n, P^n) \longrightarrow (\mu, P)
\quad \text{in } 
\mathcal{P}_2(\mathcal{C}^+) \times \mathcal{P}_2(\Omega),
\end{equation*}
where $P \in \mathcal{R}(\mu)$.
We show that $J(\mu^n, P^n) \to J(\mu, P)$.\\
We can write
\begin{equation*}
|J(\mu^n, P^n) - J(\mu, P)| 
\le |\mathcal{J}^n_1| + |\mathcal{J}^n_2|,
\end{equation*}
where
\begin{align*}
\mathcal{J}^n_1
&:= \mathbb{E}^{P^n}\!\Bigg[
\int_0^T \int_U 
\!\big(f(t, X_t, \mu^n_t, u) - f(t, X_t, \mu_t, u)\big) Q_t(\mathrm{d}u)\, \mathrm{d}t\Bigg]  \\
&\hspace{1.6cm}
+\, \mathbb{E}^{P^n}\!\Bigg[g(X_T, \mu^n_T) - g(X_T, \mu_T)
+ \int_0^T \big(h(t, X_t, \mu^n_t) - h(t, X_t, \mu_t)\big)\, \mathrm{d}K_t
\Bigg] \\[1ex]
\mathcal{J}^n_2
&:= \mathbb{E}^{P^n}\Bigg[
\int_0^T \int_U f(t, X_t, \mu_t, u) Q_t(\mathrm{d}u)\, \mathrm{d}t
+ g(X_T, \mu_T)
+ \int_0^T h(t, X_t, \mu_t)\, \mathrm{d}K_t
\Bigg] \\
&\quad -
\mathbb{E}^{P}\Bigg[
\int_0^T \int_U f(t, X_t, \mu_t, u) Q_t(\mathrm{d}u)\, \mathrm{d}t
+ g(X_T, \mu_T)
+ \int_0^T h(t, X_t, \mu_t)\, \mathrm{d}K_t
\Bigg].
\end{align*}
According to Lemma \ref{continuity-hkk} and Corollary A.5 in \cite{laker}, the mapping  
\begin{equation*}
(x,k,q) \longmapsto 
\int_0^T \int_U f(t,x_t,\mu_t,u)\, q_t(\mathrm{d}u)\, \mathrm{d}t 
+ g(x_T,\mu_T)
+ \int_0^T h(t,x_t,\mu_t)\, \mathrm{d}k_t
\end{equation*}
is continuous and satisfies a linear growth bound.
\begin{equation*}
\Big|
\int_0^T \int_U f(t,x_t,\mu_t,u)\, q_t(\mathrm{d}u)\, \mathrm{d}t 
+ g(x_T,\mu_T)
+ \int_0^T h(t,x_t,\mu_t)\, \mathrm{d}k_t
\Big|
\le C\big(1 + \|x\|_T^2 + \|k\|_T^2\big).
\end{equation*}
Hence, the convergence $P^n \to P$ in $\mathcal{P}_2(\Omega)$ implies $\mathcal{J}^n_2 \to 0$.

By the Lipschitz-type dependence of $f$ and $g$ on the measure argument, we have
\begin{align*}
&\mathbb{E}^{P^n}\Bigg[
\int_0^T \int_U |f(t, X_t, \mu_t^n, u) - f(t, X_t, \mu_t, u)|\, Q_t(\mathrm{d}u)\, \mathrm{d}t
+ |g(X_T, \mu_T^n) - g(X_T, \mu_T)|
\Bigg]  \\
&\le C \int_0^T 
\big(1 + F(W_{\mathbb{R}^+,2}(\mu_t^n,\delta_0), W_{\mathbb{R}^+,2}(\mu_t,\delta_0))\big)
W_{\mathbb{R}^+,2}(\mu_t^n,\mu_t)\, \mathrm{d}t.\\
&\le C 
\Bigg(
\int_0^T \big(1 + F(W_{\mathbb{R}^+,2}(\mu_t^n,\delta_0), W_{\mathbb{R}^+,2}(\mu_t,\delta_0))\big)^2 \mathrm{d}t
\Bigg)^{1/2}
\Bigg(
\int_0^T W_{\mathbb{R}^+,2}(\mu_t^n,\mu_t)^2 \mathrm{d}t
\Bigg)^{1/2}.
\end{align*}
Since $\sup_n W_{\mathbb{R}^+,2}(\mu_t^n,\delta_0) < \infty$ and $F$ is locally bounded, 
while Lemma~\ref{sumconv} ensures that 
$\int_0^T W_{\mathbb{R}^+,2}(\mu_t^n,\mu_t)^2 \mathrm{d}t \to 0$, 
the entire right-hand side converges to $0$.

Next, by the Lipschitz-type dependence of $h$ on the measure argument, we have
\begin{align*}
&\mathbb{E}^{P^n}\Bigg[\int_0^T |h(t, X_t, \mu_t^n) - h(t, X_t, \mu_t)|\, \mathrm{d}K_t\Bigg] \\
&\le
C\, \mathbb{E}^{P^n}\Bigg[
\int_0^T 
\big(1 + F(W_{\mathbb{R}^+,2}(\mu_t^n,\delta_0), W_{\mathbb{R}^+,2}(\mu_t,\delta_0))\big)
W_{\mathbb{R}^+,2}(\mu_t^n,\mu_t)\, \mathrm{d}K_t
\Bigg].\\
&\leq C \big(1 + F(W_{\mathcal{C}^+,2}(\mu^n,\delta_0), W_{\mathcal{C}^+,2}(\mu,\delta_0))\big)
W_{\mathcal{C}^+,2}(\mu^n,\mu)\, \mathbb{E}^{P^n}(K_T).
\end{align*}
Since $\sup_n W_{\mathcal{C}^+,2}(\mu^n,\delta_0) < \infty$ 
and, by Step~2.1 of Lemma~\ref{lemma-upper}, 
$\sup_n \mathbb{E}^{P^n}(K_T^{q'})^{1/q'} < \infty$, 
the right-hand side converges to $0$ as $n \to \infty$. Then $\mathcal{J}^n_1 \to 0.$

Finally, combining both limits yields
\begin{equation*}
|J(\mu^n, P^n) - J(\mu, P)| \longrightarrow 0,
\end{equation*}
which establishes the continuity of $J$.
\end{proof}
To apply Kakutani’s fixed-point theorem, it is necessary to construct a suitable invariant domain for the correspondence $\Phi$. 
For this purpose, we establish the following standard estimate.

\begin{lemma}\label{lemma-invary}
Under Assumption \ref{ass.A}, let $ q \geq 1 $ be such that $2q \in [2, q'] $. 
Then there exists a constant $ C' = C(T, C_1, q, |\lambda|^{q'}) $ such that, for any 
$ \mu \in \mathcal{P}_2(\mathcal{C}^+) $ and any $ P \in \mathcal{R}(\mu) $,
\begin{equation}\label{est55}
 \mathbb{E}^P \left[ \|X\|^{2q}_{T} \right] 
 \leq C' \left( 1 +  \|\mu \|^{2q}_T \right).
\end{equation}
As a consequence, $ P \in \mathcal{P}_2(\Omega) $. Moreover, if $ \mu = P \circ X^{-1} $, then
\begin{equation} \label{constinvar}
\|\mu\|^{2q}_T = \mathbb{E}^P \left[ \|X\|^{2q}_{T} \right] 
\leq C'' ,
\end{equation}
where $ C'' := C''(C', T) $.
\end{lemma}
\begin{proof}
By Proposition~\ref{rep mart}, there exists a filtered probability space 
$ (\tilde{\Omega}, \tilde{\mathcal{F}}, (\tilde{\mathcal{F}}_t)_{t \in [0,T]}, \mathbb{Q}) $
supporting an $ \tilde{\mathcal{F}}_t $-adapted pair of processes 
$ (\tilde{X}, \tilde{K}) $, together with $ m $ orthogonal 
$ \tilde{\mathcal{F}}_t $-martingale measures 
$ \tilde{M} = (\tilde{M}_1, \dots, \tilde{M}_m) $ on $ U \times [0,T] $
with intensity $ \tilde{Q} $, such that
\begin{equation}\label{foroptimal}
\begin{cases} 
\mathrm{d}\tilde{X}_{t} 
= \displaystyle\int_{U} b(t,\tilde{X}_{t}, \mu_t, u)\,\tilde{Q}_{t}(\mathrm{d}u)\,\mathrm{d}t 
+ \int_{U} \sigma(t,\tilde{X}_{t},\mu_t, u)\,\tilde{M}(\mathrm{d}u, \mathrm{d}t)
+ \mathrm{d}\tilde{K}_{t}, \\[0.6em]
\tilde{X}_{t} \geq 0 \quad \text{a.s.},\\[0.3em]
\displaystyle\int_{0}^{T}  \tilde{X}_{t}\,\mathrm{d}\tilde{K}_{t} = 0 \quad \text{a.s.}
\end{cases}
\end{equation}
and 
$ P \circ (X, Q, K)^{-1} = \mathbb{Q} \circ (\tilde{X}, \tilde{Q}, \tilde{K})^{-1} $.

Following the same reasoning as in Step~2.1 of the proof of Lemma~\ref{lemma-upper}, 
we obtain the estimate
\begin{equation*}
\mathbb{E}^{\mathbb{Q}}\left( \sup_{0 \leq t \leq T} |\tilde{X}_t|^{2q} \right) 
\leq C(q, C_1) \left(
1 + \mathbb{E}^{\mathbb{Q}}(|\tilde{X}_0|^{2q})
+ \int_0^T \|\mu\|^{2q}_t \,\mathrm{d}t
+ \mathbb{E}^{\mathbb{Q}}\left( \int_0^T |\tilde{X}_t|^{2q}\,\mathrm{d}t \right)
\right).
\end{equation*}
Applying Gronwall’s inequality yields the desired estimate~\eqref{est55}.

If, in addition, $ \mu = P \circ X^{-1} = \mathbb{Q} \circ \tilde{X}^{-1} $, then
\begin{equation}
\|\mu \|^{2q}_T 
= \mathbb{E}^{\mathbb{Q}}\left( \sup_{0 \leq t \leq T} |\tilde{X}_t|^{2q} \right)
\leq C' \,\mathbb{E}^{\mathbb{Q}}\left(\int_0^T \big( 1 + \|\mu \|^{2q}_t \big)\,\mathrm{d}t \right),
\end{equation}
and the second assertion follows by another application of Gronwall’s lemma.
\end{proof}

\begin{proof}[Proof of Theorem~\ref{theomfg}]
By Lemmas~\ref{lemma-lower}, \ref{lemma-upper}, and~\ref{contcost}, 
the correspondence $ \mathcal{R} $ is continuous, has nonempty and compact values, 
and the mapping $J : \mathrm{Gr}(\mathcal{R}) \to \mathbb{R} $ is jointly continuous. 
Hence, by the Berge Maximum Theorem~\cite[Theorem~17.31]{border}, 
the optimal correspondence $ \mathcal{R}^* $ is nonempty and upper hemicontinuous. 

Since $ \mathcal{R}(\mu) $ is convex for each $ \mu $, and the mapping 
$ P \mapsto J(\mu, P) $ is linear, it follows that 
$ \mathcal{R}^*(\mu) $ is convex for all 
$ \mu \in \mathcal{P}_2(\mathcal{C}^+) $. 
Moreover, the mapping 
$ \mathcal{P}_2(\Omega) \ni P \mapsto P \circ X^{-1} \in \mathcal{P}_2(\mathcal{C}^+) $ 
is linear and continuous. Consequently, the induced set-valued map 
$ \Phi $ is upper hemicontinuous with nonempty, compact, and convex values.  

To apply a fixed-point theorem, we construct a convex and compact subset of 
$ \mathcal{P}_2(\mathcal{C}^+) $ that is invariant under $ \Phi $. 
Define
\begin{equation*}
S = \left\{ 
P \in \mathcal{P}_2(\mathcal{C}^+) : 
\begin{array}{l}
\text{for any stopping time } \tau, \,
\mathbb{E}^{P}\big(|X_{(\tau+\delta)\wedge T} - X_{\tau}|\big) \leq l(\delta), \\[0.4em]
\mathbb{E}^P\left( \sup_{0 \leq t \leq T} |X_t|^{2q} \right) \leq C''
\end{array}
\right\},
\end{equation*}
where $ l(\delta) $ and $ C'' $ are defined in~\eqref{eqq444} and~\eqref{constinvar}, respectively. \\
By~\cite[Theorem~16.10]{belg}, the set $ S $ is relatively compact in 
$ \mathcal{P}_2(\mathcal{C}^+) $. It is nonempty and convex, 
and by Lemma~\ref{lemma-invary}, we have 
$ \Phi(\mu) \subset S \subset \overline{S} $ for all 
$ \mu \in \mathcal{P}_2(\mathcal{C}^+) $. 
Hence, $ \Phi : \overline{S} \to 2^{\overline{S}} $ 
is nonempty-valued and upper hemicontinuous.  

As in the proof of Theorem~4.1 in~\cite{laker}, 
we embed $ \Phi $ from $ \mathcal{P}_2(\mathcal{C}^+) $ into  
$ \mathcal{M}(\mathcal{C}([0,T],\mathbb{R})) $, 
the space of bounded signed measures on $ \mathcal{C}([0,T],\mathbb{R}) $ 
equipped with the topology of weak convergence. 
This space is locally convex and Hausdorff.  

Applying the Kakutani–Fan–Glicksberg fixed-point theorem 
(\cite[Corollary~17.55]{border}) to the correspondence $ \Phi$
yields the existence of a fixed point, which, by Definition~\ref{definition2}, 
corresponds to a relaxed MFG solution.
\end{proof}

\section*{Appendix}
\begin{theorem}(Theorem 7.12 of \cite{villani2}).\label{convlaw}
Let $(\mathbb{D}, d)$ be a Polish space, and $p \in [1, \infty)$. Consider a sequence of probability measures $(\mu_n)_{n \in \mathbb{N}}$ in $\mathcal{P}_p(\mathbb{D})$, and let $\mu$ be another element of $\mathcal{P}_p(\mathbb{D})$. The sequence $(\mu_n)$ is said to converge weakly in $\mathcal{P}_p(\mathbb{D})$ if any of the following equivalent properties is satisfied for some (and thus for any) $x_0 \in \mathbb{D}$:
\begin{itemize}
\item[(i)] $\mu_n \rightharpoonup \mu$ (i.e., $\mu_n$ converges weakly to $\mu$) and 
\begin{equation*}
\int d(x_0, x)^p \, \mathrm{d}\mu_n(x) \to \int d(x_0, x)^p \, \mathrm{d}\mu(x).
\end{equation*} 
\item[(ii)] $W_{\mathbb{D}, p}(\mu^n, \mu) \to 0.$ 
\item[(iii)] $\mu_n \rightharpoonup \mu$ and 
\begin{equation*}
\lim_{R \to \infty} \limsup_{n \to \infty} \int_{d(x_0, x) \geq R} d(x_0, x)^p \, \mathrm{d}\mu_n(x) = 0.
\end{equation*}
\item[(iv)] For all continuous functions $ \varphi : \mathbb{D} \to \mathbb{R}$ satisfying  
\begin{equation*}
|\varphi(x)| \leq C(1 + d(x_0, x)^p), \quad C \in \mathbb{R}^+,
\end{equation*}  
it holds that  
\begin{equation*}
\int_{\mathbb{D}} \varphi(x) \, \mathrm{d}\mu_n(x) \to \int_{\mathbb{D}} \varphi(x) \, \mathrm{d}\mu(x) \quad \text{as } n \to \infty.
\end{equation*}
\end{itemize}
\end{theorem}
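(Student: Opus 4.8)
The plan is to establish all four conditions equivalent by proving (ii) $\Rightarrow$ (i), (i) $\Leftrightarrow$ (iii), (i) $\Leftrightarrow$ (iv), and (iii) $\Rightarrow$ (ii); together these close every equivalence, and the analytic core is the single implication (iii) $\Rightarrow$ (ii), which I would handle with the Skorokhod representation theorem and a uniform-integrability argument.

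First I would dispatch the routine implications. For (ii) $\Rightarrow$ (i): since $W_{\mathbb{D},p} \ge W_{\mathbb{D},1}$ by Jensen's inequality and $W_{\mathbb{D},1}$ controls the bounded-Lipschitz metric (Kantorovich--Rubinstein duality), $W_p$-convergence forces $\mu_n \rightharpoonup \mu$; moreover, writing $\left(\int_{\mathbb{D}} d(x_0,x)^p\,\mathrm{d}\mu_n(x)\right)^{1/p} = W_{\mathbb{D},p}(\mu_n,\delta_{x_0})$, the reverse triangle inequality $\bigl|W_{\mathbb{D},p}(\mu_n,\delta_{x_0}) - W_{\mathbb{D},p}(\mu,\delta_{x_0})\bigr| \le W_{\mathbb{D},p}(\mu_n,\mu)$ gives convergence of the $p$-th moments. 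For (iv) $\Rightarrow$ (i): testing against bounded continuous functions (which trivially satisfy the growth bound) recovers weak convergence, and testing against the admissible function $x \mapsto d(x_0,x)^p$ recovers moment convergence.

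Next, the equivalence (i) $\Leftrightarrow$ (iii): under the common hypothesis $\mu_n \rightharpoonup \mu$, convergence $\int d(x_0,\cdot)^p\,\mathrm{d}\mu_n \to \int d(x_0,\cdot)^p\,\mathrm{d}\mu$ holds if and only if $\{d(x_0,\cdot)^p\}$ is uniformly integrable along $(\mu_n)$, which is precisely the tail condition in (iii); I would obtain this from the portmanteau theorem combined with Vitali's convergence theorem. The remaining implication (i) $\Rightarrow$ (iv) is then a truncation argument: for continuous $\varphi$ with $|\varphi(x)| \le C(1 + d(x_0,x)^p)$, introduce a continuous cutoff supported on $\{d(x_0,\cdot) \le R\}$; the bounded truncation converges by weak convergence, while the tail $\int_{\{d(x_0,\cdot) > R\}} |\varphi|\,\mathrm{d}\mu_n \le C\int_{\{d(x_0,\cdot) > R\}}\bigl(1 + d(x_0,\cdot)^p\bigr)\,\mathrm{d}\mu_n$ is made small uniformly in $n$ by the uniform integrability supplied by (iii).

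The main step, which I expect to be the principal obstacle, is (iii) $\Rightarrow$ (ii). Since $\mu_n \rightharpoonup \mu$ on the Polish space $\mathbb{D}$, the Skorokhod representation theorem furnishes random variables $Y_n, Y$ on a common probability space with laws $\mu_n, \mu$ and $Y_n \to Y$ almost surely, whence $d(Y_n,Y)^p \to 0$ almost surely. The coupling $(Y_n,Y)$ is admissible in the definition of $W_{\mathbb{D},p}$, so $W_{\mathbb{D},p}(\mu_n,\mu)^p \le \mathbb{E}\bigl[d(Y_n,Y)^p\bigr]$, and it remains to upgrade the almost-sure convergence of $d(Y_n,Y)^p$ to $L^1$ convergence. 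This is exactly where (iii) enters: from $d(Y_n,Y)^p \le 2^{p-1}\bigl(d(Y_n,x_0)^p + d(Y,x_0)^p\bigr)$ one sees that $\{d(Y_n,Y)^p\}_n$ inherits uniform integrability from the marginal tail condition (the term $d(Y,x_0)^p$ being a single integrable function), so Vitali's theorem yields $\mathbb{E}\bigl[d(Y_n,Y)^p\bigr] \to 0$, hence $W_{\mathbb{D},p}(\mu_n,\mu) \to 0$. The delicate point is that the Skorokhod coupling controls only the topology and not the moments, so the argument hinges on transferring the uniform integrability hypothesis (iii) from the marginals to the coupling before invoking Vitali.
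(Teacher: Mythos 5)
This statement is Theorem 7.12 of Villani's monograph, which the paper imports verbatim in its Appendix with a citation and no proof; so there is no in-paper argument to compare against, and the relevant question is only whether your self-contained proof is sound. It is. Your overall scheme (closing the cycle of implications with the Skorokhod representation theorem plus a uniform-integrability/Vitali argument for the core step (iii) $\Rightarrow$ (ii)) is one of the two standard proofs of this metrization result; Villani's own proof of that implication instead takes optimal couplings $\pi_n$ of $(\mu_n,\mu)$, shows they are tight, passes to a subsequential limit supported on the diagonal, and then invokes the tail condition to control the truncated transport cost, which avoids Skorokhod but is otherwise of the same analytic weight. Two small points you should make explicit if you write this out in full: first, the condition in (iii) is a ``uniform integrability in the limit'' statement ($\limsup_n$ rather than $\sup_n$), so to get genuine uniform integrability of $\{d(x_0,\cdot)^p\}$ along the whole sequence you must separately absorb the finitely many initial indices, each of which lies in $\mathcal{P}_p(\mathbb{D})$ by hypothesis; second, in the equivalence (i) $\Leftrightarrow$ (iii) the direction from moment convergence to the tail condition needs the usual truncation via $d(x_0,\cdot)^p\wedge R$ together with the portmanteau theorem applied to a continuity set of $\mu$, since the indicator of $\{d(x_0,\cdot)\ge R\}$ is not continuous. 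Neither point is a gap in the approach, only in the level of detail.
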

\begin{lemma} \label{sumconv}
Let $(\mu^n)_{n \in \mathbb{N}} \subset \mathcal{P}_p(\mathcal{C})$ be a sequence of probability measures converging to $\mu \in \mathcal{P}_p(\mathcal{C})$, where $\mathcal{C}$ is the space of all continuous functions from $ [0, T]$ to $\mathbb{R}$, endowed with the topology of uniform convergence.
. Then, for any $q \geq 1$,
\begin{equation*}
\int_0^T W_{\mathcal{C}, p}(\mu^n_{t}, \mu_t)^q \, \mathrm{d}t \to 0 \quad \text{as} \quad n \to \infty.
\end{equation*}
\end{lemma}
\begin{proof}
This lemma can be proven in the same manner as \cite[Lemma A.3]{luciano}.
\end{proof}
\begin{remark} \label{convterminal}
If $W_{\mathcal{C},p}(\mu^n, \mu) \to 0$, then $\mu^n_{t} \to \mu_t$ for every $t \in [0, T]$. \\
Indeed, since any continuous function $ \varphi$ on $\mathbb{R}$ that satisfies $|\varphi(x)| \leq C(1 + |x|^p)$ also satisfies
\begin{equation*}
|\varphi(x_t)| \leq C(1 + |x_t|^p) \leq C(1 + \|x\|_T^p), \quad x \in \mathcal{C},
\end{equation*}
and the mapping $\mathcal{C} \ni x \mapsto \varphi(x_t) \in \mathbb{R}$ is continuous for every $t$. \\
Thus, by point (iv) of Theorem \ref{convlaw}, we have
\begin{equation*}
\left| \int_{\mathbb{R}} \varphi(x) \, \mu_t^n(\mathrm{d}x) - \int_{\mathbb{R}} \varphi(x) \, \mu_t(\mathrm{d}x) \right|
=
\left| \int_{\mathcal{C}} \varphi(x_t) \, \mu^n(\mathrm{d}x) - \int_{\mathcal{C}} \varphi(x_t) \, \mu(\mathrm{d}x) \right| \to 0.
\end{equation*}
\end{remark}

\begin{lemma}\label{continuity-hkk}
Let $h : [0,T]\times \mathbb{R}\times \mathcal{P}_2(\mathbb{R})\to\mathbb{R}$ 
be continuous and satisfy the linear growth condition
\begin{equation*}
|h(t,x,\mu)| \le C\big(1 + |x| + \big(\int_{\mathbb{R}} |y|^2 \mu(\mathrm{d}y)\big)^{1/2}\big),
\qquad (t,x,\mu)\in [0,T]\times\mathbb{R}\times\mathcal{P}_2(\mathbb{R}).
\end{equation*}
Then the functional
\begin{equation*}
\Phi(x,k,\mu)
:=
\int_0^T h(t,x_t,\mu_t)\,\mathrm{d}k_t
\end{equation*}
is continuous on 
$\mathcal{C}([0,T];\mathbb{R})
\times \mathcal{A}([0,T];\mathbb{R})
\times \mathcal{P}_2(\mathcal{C}([0,T];\mathbb{R}))$,
where the first two spaces are equipped with the uniform topology 
and the last with the Wasserstein topology.
\end{lemma}

\begin{proof}
Let $(x^n,k^n,\mu^n)\to(x,k,\mu)$ uniformly on $[0,T]$.
Since $h$ is continuous, we have for each $t\in[0,T]$,
\begin{equation*}
h(t,x^n_t,\mu^n_t)\to h(t,x_t,\mu_t).
\end{equation*}
By the Heine–Cantor theorem, $h$ is uniformly continuous on compact subsets, hence
\begin{equation*}
\sup_{0 \leq t \leq T} |h(t,x^n_t,\mu^n_t)-h(t,x_t,\mu_t)| \to 0.
\end{equation*}
We can therefore write
\begin{equation*}
\Phi(x^n,k^n,\mu^n)
- \Phi(x,k,\mu)
=
\int_0^T\big(h(t,x^n_t,\mu^n_t)-h(t,x_t,\mu_t)\big)\,\mathrm{d}k^n_t
+
\int_0^T h(t,x_t,\mu_t)\,\mathrm{d}(k^n_t-k_t)
=: I_1^n + I_2^n.
\end{equation*}

For the first term, we have
\begin{equation*}
|I^n_1| \leq  \sup_{t\in[0,T]}|h(t,x^n_t,\mu^n_t)-h(t,x_t,\mu_t)| \, |k^n_T|.
\end{equation*}
Since $(k^n)$ converges uniformly to $k$, the sequence $(|k^n_T|)$ is bounded, and hence $I^n_1 \to 0$.

For the second term, observe that $h(\cdot,x_\cdot,\mu_\cdot)$ is continuous on $[0,T]$, and therefore bounded.  
Since $k^n \to k$ uniformly, it follows from the convergence of Riemann–Stieltjes integrals that
\begin{equation*}
\int_0^T h(t,x_t,\mu_t)\,\mathrm{d}k^n_t
\longrightarrow
\int_0^T h(t,x_t,\mu_t)\,\mathrm{d}k_t,
\end{equation*}
that is, $I_2^n \to 0$.

Combining both terms, we obtain
\begin{equation*}
\Phi(x^n,k^n,\mu^n)
\longrightarrow
\Phi(x,k,\mu),
\end{equation*}
which establishes the desired continuity.
\end{proof}

\begin{definition}[\cite{border}, Chapter~17]\label{def of set-valued conti}
Let $\mathcal{E}$ and $\mathcal{G}$ be two metric spaces.  
A set-valued mapping $\mathcal{H} : \mathcal{E} \rightrightarrows \mathcal{G}$ is defined as follows:

\begin{description}
\item[i)] $\mathcal{H}$ is said to be \emph{lower hemicontinuous} if,  
for every sequence $(x_n) \subset \mathcal{E}$ with $x_n \to x$ and every $y \in \mathcal{H}(x)$, there exists a subsequence $(x_{n_k})$ and elements $y_{n_k} \in \mathcal{H}(x_{n_k})$ such that $y_{n_k} \to y$.

\item[ii)] If $\mathcal{H}(x)$ is closed for each $x \in \mathcal{E}$, then $\mathcal{H}$ is said to be \emph{upper hemicontinuous} if,  
whenever $x_n \to x$ in $\mathcal{E}$ and $y_n \in \mathcal{H}(x_n)$ for all $n$, the sequence $(y_n)$ admits a limit point in $\mathcal{H}(x)$.
\end{description}

The correspondence $\mathcal{H}$ is said to be \emph{continuous} if it is both upper and lower hemicontinuous.
\end{definition}

\bibliographystyle{abbrv}
\bibliography{games}
\end{document}